\numberwithin{equation}{section}
\def\blue{\textcolor{blue}}
\def\red{\textcolor{red}}
\theoremstyle{plain}
\newtheorem{theorem}{Theorem}[section]
\newtheorem{lemma}[theorem]{Lemma}
\newtheorem{corollary}[theorem]{Corollary}
\newtheorem{proposition}[theorem]{Proposition}
\theoremstyle{definition}
\newtheorem{Def}[theorem]{Definition}
\newtheorem{example}[theorem]{Example}
\newtheorem{conjecture}[theorem]{Conjecture}
\newtheorem{?}[theorem]{Problem}
\def\blue{\textcolor{blue}}
\def\red{\textcolor{red}}
\def\green{\textcolor{green}}
\newcommand\catalannumber[3]{
  % start point, size, Dyck word (size x 2 booleans)
  \fill[gray!25]  (#1) rectangle +(#2,#2);
  % \fill[fill=lime]
  % (#1)
  % \foreach \dir in {#3}{
  %   \ifnum\dir=0
  %   -- ++(0,1)
  %   \else
  %   -- ++(1,0)
  %   \fi
  % } |- (#1);
  \draw[help lines] (#1) grid +(#2,#2);
  \draw[dashed] (#1) -- +(#2,#2);
  \coordinate (prev) at (#1);
  \foreach \dir in {#3}{
    \ifnum\dir=0
    \coordinate (dep) at (0,1);
    \else
    \coordinate (dep) at (1,0);
    \fi
    \draw[line width=2pt,-stealth] (prev) -- ++(dep) coordinate (prev);
  };
}
\def\boxit#1{\leavevmode\hbox{\vrule\vtop{\vbox{\kern.33333pt\hrule
    \kern1pt\hbox{\kern1pt\vbox{#1}\kern1pt}}\kern1pt\hrule}\vrule}}
\newcommand{\Z}{\mathbb{Z}}
\def\Sym{\mathfrak{S}}
\def\FV{\mathrm{FV}}
\def\FZ{\mathrm{FZ}}
\def\CSZ{\mathrm{CSZ}}
\def\SZ{\mathrm{SZ}}
\def\rasc{\mathrm{2\underline{13}}}
\def\rdes{\mathrm{2\underline{31}}}
\def\ldes{\mathrm{\underline{31}2}}
\def\lasc{\mathrm{\underline{13}2}}
\def\nest{\mathrm{nest}}
\def\cros{\mathrm{cros}}
\def\ucr{\mathrm{ucr}}
\def\ecr{\mathrm{ecr}}
\def\lcr{\mathrm{lcr}}
\def\une{\mathrm{une}}
\def\ene{\mathrm{ene}}
\def\lne{\mathrm{lne}}
\def\wene{\widetilde{\mathrm{ene}}}
\def\wne{\widetilde{\mathrm{nest}}}
\def\Ene{\mathrm{Ene}}
\def\Wene{\widetilde{\mathrm{Ene}}}
\def\Aba{\mathrm{Aba}}
\def\Dtb{\mathrm{Dtb}}
\def\Ene{\mathrm{Ene}}
\def\baxter{\mathrm{3\underline{14}2,2\underline{41}3}}
\def\dualbax{\mathrm{\overline{2}41\overline{3},\overline{3}14\overline{2}}}
\def\rU{\operatorname{U}}
\def\rD{\operatorname{D}}
\def\rLr{\operatorname{L_r}}
\def\rLb{\operatorname{L_b}}
\def\ULr{\operatorname{UL_r}}
\def\DLb{\operatorname{DL_b}}
\def\LH{\mathfrak{L}}
\def\cs{\operatorname{cs}}
\def\Bax{\operatorname{Bax}}
\newcommand\varspace[1][.7em]{%
  \makebox[#1]{%
    \kern.07em
    \vrule height.3ex
    \hrulefill
    \vrule height.3ex
    \kern.07em
  }% <-- don't forget this one!
}
\newcommand{\be}{\begin{equation}}
\newcommand{\ee}{\end{equation}}
\title[]{An involution for trivariate symmetries of vincular patterns}
\author[J.~N.~Chen]{Joanna N. Chen}
\address{College of Science, Tianjin University of Technology, Tianjin 300384, P.R. China}
\email{joannachen@tjut.edu.cn}
\author[S.~Fu]{Shishuo Fu}
\address{College of Mathematics and Statistics, Chongqing University \& Key Laboratory of Nonlinear Analysis and its Applications (Chongqing University), Ministry of Education, Chongqing 401331, P.R. China}
\email{fsshuo@cqu.edu.cn}
\author[J.~Zeng]{Jiang Zeng}
\address{Universite Claude Bernard Lyon 1, CNRS UMR 5208, Institut Camille Jordan, F-69622 Villeurbanne cedex, France}
\email{zeng@math.univ-lyon1.fr}
\keywords{involution; equidistribution; vincular pattern; Baxter permutation; Laguerre history}
\subjclass[2020]{05A05, 05A15, 05A19}
\date{\today}
\begin{document}

%\sloppy

\begin{abstract}
We provide a bijective proof of the equidistribution of two pairs of vincular patterns in permutations, thereby resolving a recent open problem of Bitonti, Deb, and Sokal (arXiv:2412.10214). Since the bijection is involutive, we also confirm their conjecture on the equidistribution of triple vincular patterns. Somewhat unexpectedly, we show that this involution is closed on the set of Baxter permutations, thereby implying another trivariate symmetry of vincular patterns. The proof of this second result requires a variant of a characterization of Baxter permutations in terms of restricted Laguerre histories, first given by Viennot using the Fran\c con-Viennot bijection.
\end{abstract}

\maketitle

%\tableofcontents

%%%%%%%%%%%%%%%%%%%%%%%%%%%%%%%%%%%%%
\section{Introduction}%\label{sec:v3}
%%%%%%%%%%%%%%%%%%%%%%%%%%%%%%%%%%%%%
A  permutation {\it pattern} is a sub-permutation of a longer permutation.
An occurrence of a pattern $p$ in a permutation $\sigma$ is a subsequence of $\sigma$ that is order-isomorphic to $p$. 
 A {\it vincular pattern} is a permutation containing underlined letters  indicating which adjacent pairs of entries need to occur consecutively. These patterns were introduced by Babson and Steingr\'{i}msson~\cite{BS00}, who showed that almost all known \emph{Mahonian} statistics could be expressed in terms of vincular patterns (i.e., the number of the occurrences of a certain vincular pattern is treated as a permutation statistic). For instance, $534261$ contains only one occurrence of the vincular pattern $\underline{43}21$ in its subsequence $5321$, while the subsequence $5421$ forms a pattern $4321$ but not the vincular pattern $\underline{43}21$. Given a (vincular) pattern $p$ and a permutation $\sigma$, we denote by $p(\sigma)$ the number of occurrences of the pattern $p$ in $\sigma$.
 More works related to vincular patterns can be found in the book exposition by Kitaev \cite[Chapter 7]{Kit11}.

 For a permutation $\sigma=\sigma_1\ldots \sigma_n$ of $[n]:=\{1, \ldots, n\}$ and a letter $1\le \ell\le n$, we define the following coordinate statistics:
\begin{align*}
% \nonumber to remove numbering (before each equation)
&\rasc(\ell,\sigma) :=  \# \{ j : \sigma^{-1}_\ell <j <n  ~\text{and}~  \sigma_j < \ell < \sigma_{j+1}  \},  \\[3pt]
&\rdes(\ell,\sigma) :=  \# \{ j : \sigma^{-1}_\ell<j <n  ~\text{and}~  \sigma_{j+1} <\ell < \sigma_j  \},  \\[3pt]
&\ldes(\ell,\sigma) :=  \# \{ j : 1  <j <\sigma^{-1}_\ell  ~\text{and}~  \sigma_{j} < \ell < \sigma_{j-1} \}, \\[3pt]
&\lasc(\ell,\sigma) :=  \# \{ j : 1  < j <\sigma^{-1}_\ell  ~\text{and}~  \sigma_{j-1} < \ell < \sigma_{j} \},
\end{align*}
and the respective vincular  statistics
$
\rasc(\sigma),\; \rdes(\sigma),\; \ldes(\sigma)$, and $ \lasc(\sigma)
$
as the sum of their corresponding coordinate statistics over $1\leq \ell\leq n$. 

Claesson~\cite[Proposition 7]{Cl01}   proved that the four vincular patterns
$\rasc, \rdes, \lasc, \ldes$ are equidistributed on $\Sym_n$, the set of all permutations of $[n]$. Shin and Zeng~\cite[Eq.~(39)]{SZ12} (see also \cite[Lemma 3.1]{FTHZ19})  proved  the equidistribution of  the bi-statistics  $(\rasc, \ldes)$ and $(\rdes, \ldes)$  on $\Sym_n$  from the common continued fraction expansion of their generating functions, viz, 
\begin{align}\label{continued fraction}
\sum_{n\geq 0} \sum_{\sigma\in \Sym_n}p^{\rdes(\sigma)}q^{\ldes(\sigma)}\,x^n
=\cfrac{1}{1-\cfrac{[1]_{p,q}\, x}{1-\cfrac{[1]_{p,q}\, x}{1-\cfrac{[2]_{p,q}\, x}{1-\cfrac{[2]_{p,q} \,x}{1-\ddots}}}}}
=\sum_{n\geq 0} \sum_{\sigma\in \Sym_n}p^{\rasc(\sigma)}q^{\ldes(\sigma)}\,x^n,
\end{align}
where $[i]_{p,q}=\frac{p^i-q^i}{p-q}$  for $i\geq 1$.

Recently, Bitonti, Deb, and Sokal~\cite{BDS24} rederived the second equation in \eqref{continued fraction} and  considered the eight possible ordered pairs formed by 
taking one vincular pattern of the form $2\underline{ab}$ and one of the form $\underline{ab}2$:
\begin{figure}[h]
\begin{minipage}{.4\textwidth}
\begin{enumerate}
\item $(\rasc, \ldes)$%(2-13, 31-2)
\item $(\ldes, \rasc)$%(31-2, 2-13)
\item $(\rdes, \lasc)$%(2-31, 13-2)
\item $(\lasc, \rdes)$%(13-2, 2-31)
\end{enumerate}
\end{minipage}
%\hfill
\begin{minipage}{.4\textwidth}
 \[\begin{tikzcd}
(1)\arrow[leftrightarrow]{d}[swap]{c}\arrow[leftrightarrow]{r}{r} & (2)
 \arrow[leftrightarrow]{d}{c} \\
(3) \arrow[leftrightarrow]{r}{r} & (4)
\end{tikzcd}
\]
  %\label{fig:test1}
\end{minipage}

\begin{minipage}{.4\textwidth}
\begin{itemize}
\item[(5)] $(\rasc, \lasc)$%(2-13, 13-2)
\item[(6)] $(\ldes, \rdes)$%(31-2, 2-31)
\item[(7)] $(\rdes, \ldes)$%(2-31, 31-2)
\item[(8)] $(\lasc, \rasc)$%(13-2, 2-13)
\end{itemize}
\end{minipage}
%\hfill
\begin{minipage}{.4\textwidth}
 \[
\begin{tikzcd}
(5)\arrow[leftrightarrow]{r}{r} 
 \arrow[leftrightarrow]{d}[swap]{c}& (6) \arrow[leftrightarrow]{d}{c} \\
(7) \arrow[leftrightarrow]{r}{r} & (8)
\end{tikzcd}
\]
  %\label{fig:test2}
\end{minipage}
\end{figure}

They observed that  the first four of these are equidistributed, and also the last four by using complementation $\sigma\mapsto \sigma^c$ (that is, mapping \emph{letters}
 $\ell\mapsto n+1-\ell$) and using reversal $\sigma\mapsto \sigma^r$ (that is, mapping \emph{indices} $i\mapsto  n+1-i$).  We can illustrate 
 these equidistributions   by drawing a digraph on $\{(1), \ldots, (8)\}$ such that 
 there is an edge  between (i) and (j)  if and only if they are related via a transformation $r$ or $c$. Hence, combining with 
 \eqref{continued fraction},  which shows the equidistribution of  the bi-statistics $(\rasc,\ldes)$ and $(\rdes,\ldes)$, the eight bi-statistics (1)--(8) are equidistributed
on $\Sym_n$. The following problem~\cite[Open Problem~7.6]{BDS24} is quite natural, see also 
\cite[Remark on p.~1697]{SZ10} and \cite[Remark~4.6]{CF23} for similar problems.

\begin{?}\label{problem.BDS}
Find a direct bijective proof of the equidistribution 
$(\rasc, \ldes)\sim (\rdes, \ldes)$.
\end{?}
 
Bitonti et al.~\cite{BDS24} further considered the joint distribution of all four vincular patterns
by defining  the polynomials in four variables
\begin{equation}
P_n(p,q,r,s)=\sum_{\sigma\in \Sym_n}p^{\lasc(\sigma)}q^{\ldes(\sigma)}
r^{\rasc(\sigma)}s^{\rdes(\sigma)}.
\end{equation}
%They noticed that
%the identity $P_n(p,q,r,s)=P_n(q,p,s,r)$ is a consequence of complementation symmetry, while the identity $P_n(p,q,r,s)=P_n(s,r,q,p)$ is a consequence of reversal symmetry. 
Combining the complementation symmetry and reversal symmetry on pairs of bi-statistics, they obtain the $\Z_2\times \Z_2$ symmetry of the polynomial $P_n$:
\begin{equation}\label{bds-symmetries}
P_n(p,q,r,s)=P_n(q,p, s,r)=P_n(s,r, q,p)=P_n(r,s, p,q).
\end{equation}
They further  verified that for $n\geq 5$ these are the only permutations of the four variables that leave the polynomial $P_n$ invariant. By setting one of the variables to 1, based on the empirical data for $n\leq 11$, they  made the following remarkable conjecture on trivariate symmetries of vincular patterns~\cite[Conjecture 7.7]{BDS24}.

\begin{conjecture}\label{conj:tri-variate}
We have the relations
\begin{align}
P_n(1,q,r,s)&=P_n(1, q, s,r),\label{sym1}\\
P_n(p,1,r,s)&=P_n(p,1, s,r),\nonumber \\
P_n(p,q,1,s)&=P_n(q,p, 1,s),\nonumber \\
P_n(p,q,r,1)&=P_n(q,p, r,1).\nonumber
\end{align}
\end{conjecture}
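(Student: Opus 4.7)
My plan is to deduce Conjecture \ref{conj:tri-variate} from a single bijective equidistribution together with the $\Z_2\times\Z_2$ symmetry \eqref{bds-symmetries}. Specifically, I will construct an involution $\phi\colon\Sym_n\to\Sym_n$ such that
\[
(\ldes,\rasc,\rdes)(\sigma)=(\ldes,\rdes,\rasc)(\phi(\sigma)),
\]
which is precisely relation \eqref{sym1} and simultaneously answers Problem \ref{problem.BDS}. The other three relations then follow formally. For example, to obtain $P_n(p,q,1,s)=P_n(q,p,1,s)$, apply $P_n(p,q,r,s)=P_n(r,s,p,q)$ to get $P_n(p,q,1,s)=P_n(1,s,p,q)$, use \eqref{sym1} to obtain $P_n(1,s,p,q)=P_n(1,s,q,p)$, and apply the same symmetry once more to reach $P_n(q,p,1,s)$. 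The remaining two relations admit entirely analogous three-step reductions via the generators $P_n(p,q,r,s)=P_n(q,p,s,r)$ and $P_n(p,q,r,s)=P_n(s,r,q,p)$. Thus the entire conjecture reduces to the construction of~$\phi$.

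To construct $\phi$, my starting point is the continued fraction \eqref{continued fraction}: the two generating functions for $(\rasc,\ldes)$ and $(\rdes,\ldes)$ share the same $J$-fraction with a doubled sequence of weights $[k]_{p,q}$. The natural combinatorial vessel is a weighted Laguerre history, so via the Fran\c{c}on--Viennot bijection (or the Foata--Zeilberger variant) $\sigma\leftrightarrow H(\sigma)$ the two bi-statistics correspond to the same weighted ensemble of histories, with the $\rasc$- and $\rdes$-weights arising as the two identical factors at each level. I would then define $\phi$ by transporting the manifestly involutive ``swap the $\rasc$-label and the $\rdes$-label at each step'' operation on histories back through $H$ to a map on permutations. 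Because this swap is trivially an involution on histories and visibly preserves the $\ldes$-label, the involutivity of $\phi$ and its effect on the triple of statistics are automatic, provided well-definedness is established.

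The main obstacle is that $\ldes(\ell,\sigma)$ and $\rdes(\ell,\sigma)$ are recorded at \emph{different times} in the standard Fran\c{c}on--Viennot history: the first sees letters to the left of $\ell$ while the second sees letters to the right, so a naive pointwise coordinate swap will not in general yield a valid Laguerre history, and the resulting path might fail to correspond to any permutation. The technical heart of the proof must therefore be (i) to identify a variant of the Fran\c{c}on--Viennot encoding --- presumably the same variant required for the Baxter result advertised in the abstract --- in which the triple $(\rasc(\ell,\sigma),\rdes(\ell,\sigma),\ldes(\ell,\sigma))$ becomes \emph{local} at the step indexed by the letter $\ell$, and (ii) to verify directly on permutations that the swap-induced map $\phi$ swaps $\rasc$ with $\rdes$, fixes $\ldes$, and is an involution. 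Once this is in place, Conjecture \ref{conj:tri-variate} is immediate by the reduction described above.
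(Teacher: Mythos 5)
Your reduction of the four conjectured identities to \eqref{sym1} alone, using the $\Z_2\times\Z_2$ symmetry \eqref{bds-symmetries}, is correct and is exactly the paper's own remark; the three-step derivation you give for $P_n(p,q,1,s)=P_n(q,p,1,s)$ checks out. The remaining issue is therefore to actually construct an involution proving \eqref{sym1}, and this is where your proposal is both incomplete and headed down a different (and harder) road than the paper.

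What you sketch --- encode $\sigma$ as a Laguerre history via a Fran\c{c}on--Viennot-type bijection, perform an involutive ``swap'' on histories, and pull back --- is essentially the approach of the earlier paper by Chen and Fu \cite{CF23}, which the present paper explicitly cites via the formula $\phi = \Phi^{-1}_{\FV}\circ\xi\circ\Phi_{\FV}$ and describes as ``somewhat involved.'' Two warnings are in order. First, the swap you call ``manifestly involutive'' and ``trivially an involution on histories'' is not a simple coordinate swap at all: the actual involution $\xi$ on restricted Laguerre histories requires a fourteen-case definition (recalled in Table~\ref{xsi-cases}), because $\ldes(\ell,\sigma)$ and $\rdes(\ell,\sigma)$ do not both become simple local labels in any Fran\c{c}on--Viennot-type encoding --- this is precisely the obstacle you yourself identify. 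Your proposed remedy, namely finding a variant of $\Phi_{\FV}$ in which all three coordinate statistics become simultaneously local, is not how either paper resolves the difficulty.

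Second, the present paper sidesteps Laguerre histories entirely. It uses the bijection $\Phi_{\SZ}$ (a variant of the Clarke--Steingr\'{i}msson--Zeng map), which sends permutations to \emph{permutations}, and which converts the linear vincular statistics $(\rdes,\ldes,\rasc)$ into the \emph{cyclic} crossing/nesting statistics $(\nest,\cros,\wne)$ by Lemma~\ref{lem:SZ}. The desired involution is then $\widehat{\phi}=\Phi^{-1}_{\SZ}\circ\widehat{\theta}\circ\Phi_{\SZ}$, where $\widehat{\theta}$ (defined in \eqref{def:hattheta}) is an explicit, elementary involution on $\Sym_n$ --- ``reverse-and-complement, except that $n$ is fixed.'' The heart of the proof is then Theorem~\ref{thm:sextuple}, a refined sextuple equidistribution for $\widehat{\theta}$ involving the decompositions of $\cros$ and $\nest$ into ending/upper/lower parts, which is verified by a direct combinatorial argument on the arc diagram. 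This replaces the 14-case history bookkeeping by a cleaner count on permutations. (The paper later shows in Section~\ref{sec:baxter} that $\widehat{\phi}$ and the CF23 map $\phi$ coincide, so they are the same involution presented in two ways.) As written, your proposal identifies the right target and the right obstacle but supplies neither the actual involution nor a complete argument; to finish along your line you would essentially need to reproduce the CF23 construction of $\xi$, which is strictly more work than the paper's $\Phi_{\SZ}$-based route.
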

By the symmetries \eqref{bds-symmetries} the four conjectured relations are equivalent. Therefore it is enough to establish any one of them -- say \eqref{sym1}. 
 A natural source of inspiration in searching for a bijection solving  Problem~\ref{problem.BDS} is Eq.~\eqref{continued fraction}, which motivates the exploration of the underlying combinatorial structures. In fact,  
 Claesson and Mansour~\cite{CM02} derived the first equation in \eqref{continued fraction} 
from a more general formula due to Clarke et al.~\cite{CSZ97}, and 
 both identities follow from Flajolet's Motzkin-path interpretation of general Jacobi-type continued fractions \cite{Fl80}, combined with the bijections $\Phi_{\FV}$ of Fran\c con and Viennot \cite{FV79}  or  $\Phi_{\FZ}$ of Foata and Zeilberger \cite{FZ90}. 
These bijections map permutations onto \emph{Laguerre histories}. Along these lines, the first two authors~\cite[Corollary 4.5]{CF23} resolved the problem by constructing the mapping\footnote{To be precise, here $\Phi_{\FV}$ refers to a variant of the original Fran\c con-Viennot bijection, sending each permutation of length $n$ to a (restricted) Laguerre history of length $n$; see the beginning of subsection~\ref{subsec:proof_of_theorem_ref_thm_baxter} for a further discussion on this.}
\begin{align}\label{def:phi}
\phi := \Phi^{-1}_{\FV} \circ \xi \circ \Phi_{\FV},
\end{align}
where $\xi$ is an involution on (restricted)  Laguerre histories.
As $\phi$ is clearly an involution on permutations,  if 
$(\rasc, \ldes)\sigma= (\rdes, \ldes)\phi(\sigma)$  for $\sigma\in \Sym_n$,  substituting 
$\sigma\to \phi(\sigma)$ we have 
\begin{equation}
(\rasc,\rdes, \ldes)\sigma= (\rdes, \rasc, \ldes)\phi(\sigma).
\end{equation}
Hence, the open problem and conjecture in \cite{BDS24} were resolved by the first two authors in \cite{CF23} via (restricted)  Laguerre histories, although the construction of $\xi$ and the resulting proofs are somewhat involved.  
  
In this paper we shall define a direct involution $\widehat{\phi}$  on permutations to resolve Problem~\ref{problem.BDS} and  Conjecture~\eqref{sym1}.  
In \cite{CSZ97},  Clarke et al.  managed to characterize 
 the composition $\Phi^{-1}_{\FZ}\circ \Phi_{\FV}$  by a  bijection $\Phi_{\CSZ}$ on permutations.
For our purpose we need a
variant $\Phi_{\SZ}$  of $\Phi_{\CSZ}$  in \cite{SZ10}. 
If $\sigma=\sigma_1\sigma_2\ldots \sigma_n\in \Sym_n$, let
\begin{align}\label{def:hattheta}
\widehat{\theta}(\sigma) &= \widehat{\sigma}_n\widehat{\sigma}_{n-1}\ldots\widehat{\sigma}_1,
\end{align}
where the map $\widehat{\phantom{a}}: a\mapsto \widehat{a}$ is defined by
$$\widehat{a} =\begin{cases}
n-a & \text{if $1\le a<n$,}\\
a & \text{if $a=n$.}
\end{cases}$$

Since the map $\widehat{\theta}$ is an involution on $\Sym_n$, it follows that the mapping
\begin{align}\label{eq:hatphi}
\widehat{\phi} = \Phi^{-1}_{\SZ} \circ \widehat{\theta} \circ \Phi_{\SZ}
\end{align}
is itself an involution on $\Sym_n$.

\begin{theorem}\label{thm:hatphi}
The mapping $\widehat{\phi}$ defined in \eqref{eq:hatphi} is an involution on $\Sym_n$ such that
for all $\sigma\in\Sym_n$,
\begin{align}\label{eq:phi213}
(\ldes, \rdes) \sigma=(\ldes, \rasc)\widehat{\phi}(\sigma).
\end{align} 
\end{theorem}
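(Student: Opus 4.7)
The involutive property of $\widehat{\phi}$ is immediate: it is the conjugate of the involution $\widehat{\theta}$ by the bijection $\Phi_{\SZ}$. The substance of the theorem is therefore the statistic identity \eqref{eq:phi213}, which I must verify.

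My strategy is to pull the identity through $\Phi_{\SZ}$ and argue at the level of image permutations. Letting $\pi = \Phi_{\SZ}(\sigma)$, \eqref{eq:phi213} becomes
\[
(\ldes, \rdes)(\Phi^{-1}_{\SZ}(\pi)) = (\ldes, \rasc)(\Phi^{-1}_{\SZ}(\widehat{\theta}(\pi))),
\]
so the first step is to establish how $\Phi_{\SZ}^{-1}$ transforms the vincular statistics $\ldes$, $\rdes$, and $\rasc$. Since $\Phi_{\SZ}$ is a variant of the composition $\Phi^{-1}_{\FZ} \circ \Phi_{\FV}$, this transformation can be read off from the Laguerre-history interpretations of the Fran\c{c}on--Viennot and Foata--Zeilberger bijections: each letter $\ell$ contributes to $\ldes$, $\rdes$, or $\rasc$ via nestings or crossings at the corresponding step of the underlying Motzkin path. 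This yields a clean description of the triple $(\ldes, \rdes, \rasc)(\Phi_{\SZ}(\sigma))$ in terms of Laguerre data attached to $\sigma$.

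The second step is to analyze the action of $\widehat{\theta}$ directly on permutations. Writing $\sigma^{rc}$ for the standard reverse-complement, a routine computation (which I verified on the small case $\Sym_3$) shows that $\widehat{\theta}(\sigma) = \rho(\sigma^{rc})$, where $\rho$ is the cyclic relabeling on letters sending $v \mapsto v-1$ for $v \ge 2$ and $1 \mapsto n$. The reverse-complement permutes the four vincular statistics according to the symmetries displayed in the introduction; the cyclic shift $\rho$ then adjusts the contributions involving the letter $n$. Crucially, fixing $n$ under the hat (rather than sending $n \mapsto 1$, as pure reverse-complement would) keeps $n$ as the distinguished endpoint of the associated Motzkin path, a feature exploited by $\Phi_{\SZ}$.

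Combining the two analyses, \eqref{eq:phi213} follows by showing that the effect of $\widehat{\theta}$ on $\pi$ precisely interchanges the $\rdes$- and $\rasc$-contributions through $\Phi_{\SZ}^{-1}$ while preserving the $\ldes$-contribution. The main obstacle will be the coordinate-level verification: for each letter $\ell \in [n]$, one must match the contribution of $\ell$ to $(\ldes, \rdes)$ on $\sigma$ with that of its image under $\widehat{\phi}$ to $(\ldes, \rasc)$. Because $\rho$ is not order-preserving (it sends the smallest letter to the largest), tracking pattern occurrences through $\rho$ is delicate; but the explicit combinatorial description of $\Phi_{\SZ}$ from \cite{SZ10} should supply enough rigidity (preservation of each letter's local ``type'' --- valley, peak, double ascent, double descent --- together with the nesting/crossing refinement) to carry out the match. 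Summation over $\ell$ then yields the full identity.
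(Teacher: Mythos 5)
Your outline has the right two-stage shape --- conjugate the statistics through $\Phi_{\SZ}$ and then analyze $\widehat{\theta}$ directly --- but both stages are left as promissory notes, and the critical combinatorial content is missing.

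For the first stage, you gesture at ``Laguerre-history interpretations'' and claim a ``clean description'' of $(\ldes,\rdes,\rasc)\circ\Phi_{\SZ}$ without ever saying what that description is. The paper works at the level of permutations, not Laguerre histories, and the actual content is the translation into \emph{crossing} and \emph{nesting} statistics (Lemma~\ref{lem:SZ}): with $\sigma=\Phi_{\SZ}(\pi)$ one has $(\rdes,\ldes)\pi=(\nest,\cros)\sigma$ and, crucially, $\rasc(\pi)=\wne(\sigma)$, where $\wne$ is the modified nesting statistic obtained by replacing $\ene$ with $\wene$. Producing this third identity is nontrivial --- it hinges on the balance relation $\rasc(\pi)=\rdes(\pi)-|\Aba(\pi)|+|\Dtb(\pi)|$ together with the set-level correspondence $(\Aba,\Dtb)\pi=(\Ene,\Wene)\sigma$ --- and your sketch does not even introduce the statistics $\wne$, $\Ene$, $\Wene$ that make the argument go. Without them, there is no concrete target to hit.

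For the second stage, your factorization $\widehat{\theta}=\rho\circ rc$ (reverse-complement followed by a cyclic shift on letters) is correct, but you never extract from it the statement the proof actually needs, namely that $\widehat{\theta}$ preserves $\cros$ while carrying $\nest$ to $\wne$ (Theorem~\ref{thm:sextuple}). The appeal to ``the symmetries displayed in the introduction'' is a non-sequitur here: those symmetries live on $(\lasc,\ldes,\rasc,\rdes)$, whereas after conjugation by $\Phi_{\SZ}$ you are no longer looking at vincular patterns at all but at arc crossings and nestings, and reverse-complement alone sends $\nest$ to $\nest$ (it swaps upper and lower crossings/nestings pairwise, fixing $\cros$ and $\nest$), so it cannot by itself produce the $\nest\leftrightarrow\wne$ exchange. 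The subtle work is entirely in the interaction with $\rho$ and with the ending-crossing/nesting pairs attached to the letter $n$; the paper handles this through the six-fold decomposition $(\ecr,\ucr,\lcr,\ene,\une,\lne)\mapsto(\ecr,\lcr,\ucr,\wene,\lne,\une)$ and a counting argument $|E_1|=|E_2|$ for the $\ecr$-invariance. None of that appears in your proposal, and ``should supply enough rigidity'' is not a proof. As it stands this is a plan, not an argument.
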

The mapping $\widehat{\phi}$ provides an answer to Problem~\ref{problem.BDS}.
Since $\widehat\phi$ is an involution, it follows that  for all $\sigma\in\Sym_n$,
\begin{align}\label{eq:phisymm}
(\ldes, \rasc,\rdes)\sigma = (\ldes, \rdes,\rasc)\widehat{\phi}(\sigma).
\end{align}
Clearly Eq.~\eqref{eq:phisymm} implies  \eqref{sym1}.

Let $\Sym_n(\baxter)$ denote the set of permutations in $\Sym_n$ that avoid simultaneously the two vincular patterns $3\underline{14}2$ and $2\underline{41}3$. These restricted  permutations are known as the \emph{Baxter permutations} \cite[Chapter 2.2.4]{Kit11}. For example, 436975128 is a Baxter permutation but 42173856 is not. The sequence $\{\Bax_n\}_{n\ge 0}$ that enumerates Baxter permutations of length $n$ is now called the sequence of \emph{Baxter numbers} and registered in the OEIS~\cite[A001181]{OEIS}. The following remarkable explicit formula  was first given by Chung, Graham, Hoggatt, and Kleiman~\cite[Eq.~(1) on p.~383]{CGHK78}, with an elegant bijective argument provided by Viennot~\cite{Vi81}.
\begin{align}
\Bax_n = \sum_{k=0}^{n-1}\frac{\binom{n+1}{k}\binom{n+1}{k+1}\binom{n+1}{k+2}}{\binom{n+1}{1}\binom{n+1}{2}}.
\end{align}

The trivariate symmetry shown in \eqref{eq:phisymm} concerns three length 3 vincular patterns. Interestingly enough, when vincular patterns of length 4 were taken into account (not from the permutation statistics point of view, but from the pattern avoidance point of view), we find a quite surprising application of $\widehat{\phi}$ in Baxter permutations.

\begin{theorem}\label{thm:baxter}
The mapping $\widehat\phi$ is closed on the set of Baxter permutations.
Consequently, in view of \eqref{eq:phisymm}, the following identity holds for all $n\ge 1$:
\begin{align}\label{eq:baxter equidist}
\sum_{\pi \in \Sym_n(\baxter)} q^{\ldes(\pi)}r^{\rasc(\pi)}s^{\rdes(\pi)}=
\sum_{\pi \in \Sym_n(\baxter)} q^{\ldes(\pi)}r^{\rdes(\pi)}s^{\rasc(\pi)}.
\end{align}
\end{theorem}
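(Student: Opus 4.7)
The plan is to transport the Baxter avoidance condition through the bijection $\Phi_{\SZ}$ and then verify that the image is invariant under $\widehat{\theta}$. Since $\widehat{\phi}=\Phi^{-1}_{\SZ}\circ\widehat{\theta}\circ\Phi_{\SZ}$, for any $\sigma\in\Sym_n$ we have
\[
\widehat{\phi}(\sigma)\in\Sym_n(\baxter)\iff\widehat{\theta}(\Phi_{\SZ}(\sigma))\in\Phi_{\SZ}(\Sym_n(\baxter)),
\]
so it suffices to show that the set $\Phi_{\SZ}(\Sym_n(\baxter))\subseteq\Sym_n$ is preserved by $\widehat{\theta}$. Once this is established, $\widehat{\phi}$ restricts to an involution on $\Sym_n(\baxter)$, and identity~\eqref{eq:baxter equidist} then follows immediately by applying \eqref{eq:phisymm} to $\sigma\in\Sym_n(\baxter)$ and summing over the set.

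The first and main step is to obtain the variant of Viennot's classical characterization that is advertised in the abstract. Viennot used $\Phi_{\FV}$ to show that $\sigma\in\Sym_n(\baxter)$ iff, at every step of the Laguerre history $\Phi_{\FV}(\sigma)$, the accompanying label is extremal (taking either the smallest or the largest admissible value). Since $\Phi_{\SZ}$ is essentially $\Phi^{-1}_{\FZ}\circ\Phi_{\FV}$, pulling back this extremality condition through $\Phi^{-1}_{\FZ}$ should yield an equivalent criterion on $\tau=\Phi_{\SZ}(\sigma)$ phrased in the refined crossing/nesting coordinates $\ucr,\lcr,\une,\lne$ recorded by $\Phi_{\FZ}$ at each letter. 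The expected shape is that, for each letter $\ell$ of $\tau$, a prescribed pair among $\{\ucr,\lcr,\une,\lne\}(\ell,\tau)$ vanishes, with the choice of pair dictated by the type of $\ell$ (peak, valley, double ascent, or double descent). With such a description in hand, the second step---checking invariance under $\widehat{\theta}$---would follow because $\widehat{\theta}$ is essentially reversal-and-complement (fixing $n$), which swaps upper with lower and peak with valley roles so as to exchange $\ucr\leftrightarrow\lne$ and $\lcr\leftrightarrow\une$ (up to the controlled fixed-letter boundary effect at $n$); the criterion would then be manifestly symmetric under this exchange.

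The hardest part is establishing Step~1. Viennot's original proof is tailored to the Fran\c{c}on-Viennot model, where labels directly encode the position of the newly inserted element relative to existing descents and level steps, making the extremality condition both natural and easy to verify. The Foata-Zeilberger encoding underlying $\Phi_{\SZ}$ uses instead the finer crossing/nesting decomposition at each letter, so re-deriving Viennot's theorem in this language amounts to a careful case analysis at the four letter types and a bookkeeping of how the labels in the two encodings are aligned. Producing such an analysis in a form that is visibly symmetric enough for Step~2 to go through cleanly is where the bulk of the technical work lies; it is essentially a new, Foata-Zeilberger-flavored proof of Viennot's characterization of Baxter permutations by restricted Laguerre histories, which is exactly the variant promised in the abstract.
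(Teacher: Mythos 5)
Your overall architecture---characterize $\Phi_{\SZ}(\Sym_n(\baxter))$ and then show that set is stable under $\widehat{\theta}$---is sound, and it is a genuinely different execution from the paper's. The paper does \emph{not} stay on the permutation side: it first establishes $\widehat{\phi}=\phi=\Phi^{-1}_{\FV}\circ\xi\circ\Phi_{\FV}$, then pivots entirely to the Laguerre-history picture. There the Baxter condition becomes ``prudence'' of the sr-Laguerre history (a constraint $c_{j+1}-c_j\in\{0,1\}$ or $\{0,-1\}$ depending on step type, Definition~\ref{def:prudent} / Lemma~\ref{lem:Viennot}), and the closure statement reduces to checking that the involution $\xi$ preserves prudence---which is a mechanical lookup in the already-tabulated case analysis for $\xi$ (Table~\ref{xsi-cases}, inherited from \cite{CF23}). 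That is what the paper's route buys: the characterization lives naturally in the FV model, where Viennot's argument already works, and the preservation check reuses existing bookkeeping for $\xi$. Your route would buy something different---one would never have to leave $\Sym_n$ or mention $\xi$ at all, only the elementary map $\widehat{\theta}$---which is aesthetically appealing.

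However, as written your proposal has a genuine gap, and you flag it yourself: Step~1 is not carried out. The ``expected shape'' you describe (for each letter $\ell$ a prescribed pair among $\ucr,\lcr,\une,\lne$ vanishes, depending on the type of $\ell$) is a heuristic rather than a proved statement, and it is not obviously the right formulation. The paper's own version of Viennot's criterion is not a pointwise extremality of individual labels but a bound on \emph{consecutive differences} $c_{j+1}-c_j$ (this shift is forced by the shifted/restricted normalization of the histories, and the paper proves it from scratch by tracking how $j$ and $j+1$ can create $\overline{2}41\overline{3}$ or $\overline{3}14\overline{2}$ patterns during insertion). Transporting that to the $\Phi_{\SZ}$ side via $\Phi_{\FV}=\Phi_{\FZ}\circ\iota\circ\Phi_{\SZ}$ would therefore relate the coordinate crossing/nesting statistics of \emph{adjacent letters} $\ell,\ell+1$ of $\tau=\Phi_{\SZ}(\sigma)$, not annihilate a pair of them at a single letter; and one also has to keep track of the extra inverse $\iota$, which you omit. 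Until that characterization is stated precisely and proved, and until the compatibility with $\widehat{\theta}$ (including the boundary effect at the fixed letter $n$) is actually checked against it, the argument is a program rather than a proof. The paper's Lemma~\ref{lem:Viennot} together with Lemma~\ref{lem:xi closed} is exactly the content that would have to be reproduced, in different notation, to close your gap.
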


The rest of the paper is organized as follows. The proof of Theorem~\ref{thm:hatphi} is given in Section~\ref{sec:pf of Thm1.3}, where the definition of the mapping $\Phi_{\SZ}$ will be recalled, together with some property of $\Phi_{\SZ}$ that will be useful; see Lemma~\ref{lem:SZ}. Next in Section~\ref{sec:baxter}, we first establish the equivalence between the two mappings $\widehat\phi$ and $\phi$, then we prove Theorem~\ref{thm:baxter}, utilizing a characterization of Baxter permutations in terms of (restricted) Laguerre histories due to Viennot~\cite{Vi81}; see Lemma~\ref{lem:Viennot}. We end the paper with some closing thoughts.

% \begin{remark}
% It can be shown, using known results from \cite{CSZ97, SZ10, CF23}, that $\widehat\phi = \phi$; the proof is omitted here.
% \end{remark}
%then replacing $\sigma$ by $\widehat{\phi}(\sigma)$ in \eqref{eq:phi213} yields
%$\rasc(\sigma)=\rdes(\phi(\sigma))$.
%Hence it suffices to prove only \eqref{eq:phi213}. 
%
%
%We observe that  such a  bijective proof had been given in \cite[Corollary~4.5]{CF23} via the Fran\c con-Viennot
% bijection $\Phi_{FV}$, see section~3.
%
%We shall prove
% and we show that $\widehat\phi$ equals the mapping $\phi$ in \cite{CF23} in Section~3.
%

%%%%%%%%%%%%%%%%%%%%%%%%%%%%%%%%%%%%%%%%%%%%%%%%%%%%%%%%%%%%%%%%%%%%%%%%%%%%%%%%%%%%%%%%%%%%%%%%%%
\section{Proof of Theorem~\ref{thm:hatphi}}\label{sec:pf of Thm1.3}
First we associate to each permutation $\sigma \in \Sym_n$
a pictorial representation (Figure~\ref{fig.pictorial})
by placing vertices $1,2,\ldots,n$ along a horizontal axis
and then drawing an arc from $i$ to $\sigma_i$
above (resp.\ below) the horizontal axis
in case $\sigma_i > i$ (resp.\ $\sigma_i < i$);
if $\sigma_i = i$ we do not draw any arc.
\begin{figure}[t]
\centering
\vspace*{4cm}
\begin{picture}(60,20)(120, -65)
\setlength{\unitlength}{2mm}
\linethickness{.5mm}
\put(-2,0){\line(1,0){54}}
\put(0,0){\circle*{1,3}}\put(0,0){\makebox(0,-6)[c]{\small 1}}
\put(5,0){\circle*{1,3}}\put(5,0){\makebox(0,-6)[c]{\small 2}}
\put(10,0){\circle*{1,3}}\put(10,0){\makebox(0,-6)[c]{\small 3}}
\put(15,0){\circle*{1,3}}\put(15,0){\makebox(0,-6)[c]{\small 4}}
\put(20,0){\circle*{1,3}}\put(20,0){\makebox(0,-6)[c]{\small 5}}
\put(25,0){\circle*{1,3}}\put(25,0){\makebox(0,-6)[c]{\small 6}}
\put(30,0){\circle*{1,3}}\put(30,0){\makebox(0,-6)[c]{\small 7}}
\put(35,0){ \circle*{1,3}}\put(36,0){\makebox(0,-6)[c]{\small 8}}
\put(40,0){\circle*{1,3}}\put(40,0){\makebox(0,-6)[c]{\small 9}}
\put(45,0){\circle*{1,3}}\put(45,0){\makebox(0,-6)[c]{\small 10}}
\put(50,0){\circle*{1,3}}\put(50,0){\makebox(0,-6)[c]{\small 11}}
\green{\qbezier(0,0)(20,14)(40,0)
\qbezier(40,0)(42.5,6)(45,0)}
\red{\qbezier(4,0)(6.5,5)(9,0)
\qbezier(9,0)(18,10)(29,0)}
\blue{\qbezier(18,0)(20.5,5)(23.5,0)
\qbezier(23.2,0)(36,12)(48.5,0)
\qbezier(18,0)(34,-12)(48.5,0)}
%%%%%%%%%%%%%%%%%%%%%%%%%%
\red{\qbezier(2.5,0)(17,-14)(27.5,0)}
\green{\qbezier(-3,0)(22,-20)(42,0)}
\put(-5,-13){$\sigma = 9~3~7~4~6~11~2~8~10~1~5 = (1,9,10)\,(2,3,7)\,(4)\,(5,6,11)\,(8)$}
\end{picture}
\caption{
   The pictorial representation of a permutation $\sigma$ in $\Sym_{11}$ \label{fig.pictorial}
}
\end{figure}
Each vertex thus has either
out-degree = in-degree = 1 (if it is not a fixed point) or
out-degree = in-degree = 0 (if it is a fixed point). Of course, the arrows on the arcs are redundant,
because the arrow on an arc above (resp.\ below) the axis
always points to the right (resp.\ left).

Next we introduce various kinds of permutation statistics, both linear and cyclic ones. Throughout the following definitions we fix a given permutation $\sigma\in\Sym_n$. First recall two cyclic statistics, namely, the crossing numbers and the nesting numbers. In parallel with linear statistics, we introduce them as the sums of the following coordinate statistics. For $1\le i\le n$, let
\begin{align*}
\cros(i,\sigma) &:= |\{j\in [n]:j<i\le\sigma_j<\sigma_i\text{ or }j>i>\sigma_j>\sigma_i\}|, \\
\nest(i,\sigma) &:= |\{j\in [n]:j<i\le\sigma_i<\sigma_j\text{ or }j>i>\sigma_i>\sigma_j\}|, \\
\cros(\sigma) &:= \sum_{i=1}^n\cros(i,\sigma),\quad
\nest(\sigma) := \sum_{i=1}^n\nest(i,\sigma).
\end{align*}
%[Todo: add an example with picture]
For example, if
   $\sigma = 9\,3\,7\,4\,6\,11\,2\,8\,10\,1\,5
           \in \Sym_{11}$, then $\sigma= (1,9,10)\,(2,3,7)\,(4)\,(5,6,11)\,(8)$ and a pictorial representation of $\sigma$  is given in Figure~\ref{fig.pictorial} with
           $\cros(\sigma)=7$ and $\nest(\sigma)=10$.

We further distinguish three kinds of crossings, as well as three kinds of nestings. Namely the {\it ending-crossing} number, the {\it upper-crossing} number, the {\it lower-crossing} number, the {\it ending-nesting} number, the {\it upper-nesting} number, and the {\it lower-nesting} number of $\sigma$ are denoted and defined respectively as follows.
\begin{align}
\label{def:ecr}
\ecr(\sigma) &:= |\{(i,j)\in[n]\times[n]:i<j\le\sigma_i<\sigma_j=n\}|,\\
\ucr(\sigma) &:= |\{(i,j)\in[n]\times[n]:i<j\le\sigma_i<\sigma_j<n\}|,\\
\lcr(\sigma) &:= |\{(i,j)\in[n]\times[n]:i>j>\sigma_i>\sigma_j\}|,\\
\ene(\sigma) &:= |\{(i,j)\in[n]\times[n]:i<j\le\sigma_j<\sigma_i=n\}|,\\
\une(\sigma) &:= |\{(i,j)\in[n]\times[n]:i<j\le\sigma_j<\sigma_i<n\}|,\\
\lne(\sigma) &:= |\{(i,j)\in[n]\times[n]:i>j>\sigma_j>\sigma_i\}|.
\end{align}
Clearly we have
\begin{align}\label{eq:crosdecomp}
\cros(\sigma)&=\ecr(\sigma)+\ucr(\sigma)+\lcr(\sigma),\\
\nest(\sigma)&=\ene(\sigma)+\une(\sigma)+\lne(\sigma).
\label{eq:nestdecomp}
\end{align}
We also need the following variant of $\ene$:
\begin{align*}
\wene(\sigma) &:= |\{(i,j)\in[n]\times[n]:\sigma_j<j<i\le\sigma_i=n\}|,
\end{align*}
which leads to a variant of the nesting numbers:
\begin{align}\label{eq:wnedecomp}
\wne(\sigma) &:= \wene(\sigma)+\une(\sigma)+\lne(\sigma).
\end{align}
The following set-valued versions of $\ene(\sigma)$ and $\wene(\sigma)$ are also required.
\begin{align*}
\Ene(\sigma) &:=\{j\in[n]:i<j\le\sigma_j<\sigma_i=n\},\\
\Wene(\sigma) &:=\{j\in[n]:\sigma_j<j<i\le\sigma_i=n\}.
\end{align*}

For linear statistics, we need the following pair of set-valued statistics, which are related to the sets of ascent bottoms and descent tops, respectively, and were introduced in \cite{CF23}:
\begin{align*}
\Aba(\sigma) &=\{\sigma_i : \sigma_n<\sigma_i<\sigma_{i+1}\}, \\[3pt]
\Dtb(\sigma) &=\{\sigma_i : \sigma_{i+1}<\sigma_i <\sigma_n\}.
\end{align*}

The next relation is easy to verify by the definitions of the statistics and was noted in the proof of Prop.~4.2 in \cite{CF23}. For any $\pi\in\Sym_n$,
\begin{align}
\label{eq:diff-213-231}
\rasc(\pi)=\rdes(\pi)-|\Aba(\pi)|+|\Dtb(\pi)|.
\end{align}

To make this paper self-contained, we include here a description of the mapping $\Phi_{\SZ}$ in \cite{SZ10, SZ12} as in the following. The reader may go through this process with the permutation provided in Example \ref{ex2} below.

Given a permutation $\sigma=\sigma_1\cdots \sigma_n$, we proceed as follows:
    \begin{itemize}
    \item Determine the sets of descent tops $F$  and descent bottoms $F'$, and their complements $G$ and $G'$, respectively;
    \item let $f$ and  $g$ be  the  increasing permutations of $F$ and $G$, respectively;
    \item construct the biword $\binom{f}{f'}$: for each $j$ in the first row $f$ starting from the \textbf{smallest} (leftmost), the entry in $f'$ that below $j$ is the $(\ldes(j,\sigma)+1)$-th largest entry of $F'$ that is smaller than $j$ and not yet chosen;
    \item construct the biword $\binom{g}{g'}$: for each $j$ in the first row  $g$ starting from the \textbf{largest} (rightmost), the entry  below $j$ in $g'$ is the $(\ldes(j,\sigma)+1)$-th smallest entry of $G'$ that is not smaller than $j$ and not yet chosen;
    \item rearrange the columns so that the top row is in increasing order,
    we obtain the permutation $\tau=\Phi_{\SZ}(\sigma)$ as the bottom row of the rearranged biword.
\end{itemize}

\begin{example}\label{ex2}
For  $\sigma=4~7~1~8~6~3~2~5$ with
$$
\begin{tabular}{c|c|c|c|c|c|c|c|c}
$\sigma=$&4&7&1&8&6&3&2&5\\
\hline
$\ldes(i, \sigma)$&0&0&0&0&1&1&1&2\\
\hline
$\rdes(i,\sigma)$&2&1&0&0&0&0&0&0\\
\end{tabular}
\quad
\xrightarrow{\Phi_{\SZ}} \quad%%%%%%%%%%%%%%%%%
\begin{tabular}{c|c|c|c|c|c|c|c|c}
$\tau=$&5&7&1&4&8&2&6&3\\
\hline
$\cros(i,\tau)$&2&0&0&0&0&1&1&1\\
\hline
$\nest(i,\tau)$&0&1&0&2&0&0&0&0\\
\end{tabular}.
$$
We have
$$
	\binom{f}{f'}
	=\left(\begin{array}{cccc}
    3_1 & 6_1 & 7 & 8 \\
    1 & 2 & 6 & 3
    \end{array}
    \right),
	\quad
	\binom{g}{g'}
	= \left(\begin{array}{cccc}
    1 & 2_1 & 4 & 5_2 \\
    5 & 7 & 4 & 8
    \end{array}
	\right).
	$$
	Hence
	$$
	\tau
	= \left(\begin{array}{cc} f & g \\ f' & g'\end{array} \right)
	\xrightarrow{\text{rearrange}} \left(\begin{array}{cccccccc}
    1 & 2 & 3 & 4 & 5 & 6 & 7 & 8 \\
    5 & 7 & 1 & 4 & 8 & 2 & 6 & 3
    \end{array}
	\right).
	$$
\end{example}

The mapping $\Phi_{\SZ}$ transforms the concerned permutation statistics as follows.
\begin{lemma}\label{lem:SZ}
For $n\ge 1$, $\Phi_{\SZ}$ is a bijection on $\Sym_n$ such that for all $\pi\in\Sym_n$ and its image $\sigma:=\Phi_{\SZ}(\pi)$, we have for $1\le i\le n-1$,
\begin{align}
\label{eq:ndes-wex}
\pi_i<\pi_{i+1} &\Longleftrightarrow \sigma_{\pi_i}\ge \pi_i\neq\pi_n,
\end{align}
and
\begin{align}
\label{eq:last entry}
\sigma_{\pi_n} &=n,\\
\label{eq:312-cros}
(\rdes,\ldes) \pi &= (\nest,\cros) \sigma,\\
(\Aba,\Dtb) \pi &= (\Ene,\Wene) \sigma,\label{eq:aba-ene}\\
\rasc(\pi) &= \wne(\sigma).
\label{eq:213-wne}
\end{align}
\end{lemma}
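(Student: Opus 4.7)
My plan is to prove the lemma in four stages, taking the bijectivity of $\Phi_{\SZ}$ and the bi-statistic identity \eqref{eq:312-cros} as already established in \cite{SZ10, SZ12}, and then deriving the remaining identities in a logical cascade.

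The first stage is \eqref{eq:ndes-wex}, which should be essentially immediate from the biword construction. The key observation is the dichotomy built into $\Phi_{\SZ}$: for each $j\in F$ the entry $\sigma_j=f'(j)$ is drawn from $F'\cap[1,j-1]$, so $\sigma_j<j$; while for each $j\in G$ the entry $\sigma_j=g'(j)$ is drawn from $G'\cap[j,n]$, so $\sigma_j\ge j$. Since for $i<n$ one has $\pi_i\in F$ exactly when $\pi_i>\pi_{i+1}$, and $\pi_i\ne\pi_n$ is automatic for $i<n$, the equivalence \eqref{eq:ndes-wex} falls out at once.

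The second stage, \eqref{eq:last entry}, is where I expect the main work. Note first that $\pi_n\in G$ (it has no successor and so cannot be a descent top) and $n\in G'$ (as the global maximum it cannot be a descent bottom). Processing $\binom{g}{g'}$ in decreasing order on $G$, I need to verify that $n$ survives every step preceding the one at which $\pi_n$ is processed and is then selected as $g'(\pi_n)$. The hard part is the following tally: for every $j\in G$ with $j>\pi_n$, one must show that the rank $\ldes(j,\pi)+1$ selects an element of $G'\cap(j,n)$ rather than $n$ itself. This should follow by interpreting $\ldes(j,\pi)$ as counting descents of $\pi$ that straddle $j$, and matching that number to the depletion of $G'\cap(\pi_n,n]$ at the current step; a symmetric counting argument then pinpoints $n$ as precisely the $(\ldes(\pi_n,\pi)+1)$-th smallest available element of $G'\cap[\pi_n,n]$ at the $\pi_n$-step.

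With \eqref{eq:ndes-wex} and \eqref{eq:last entry} in hand, the remaining two identities are clean consequences. Since $\sigma^{-1}(n)=\pi_n$ by \eqref{eq:last entry}, the sets $\Ene(\sigma)$ and $\Wene(\sigma)$ collapse to $\{j>\pi_n:\sigma_j\ge j\}$ and $\{j<\pi_n:\sigma_j<j\}$ respectively; applying \eqref{eq:ndes-wex} identifies these with $\Aba(\pi)$ and $\Dtb(\pi)$, yielding \eqref{eq:aba-ene}. Finally, \eqref{eq:213-wne} follows by chaining \eqref{eq:diff-213-231}, \eqref{eq:312-cros}, \eqref{eq:aba-ene}, and the decompositions \eqref{eq:nestdecomp}--\eqref{eq:wnedecomp}:
\[
\rasc(\pi)=\rdes(\pi)-|\Aba(\pi)|+|\Dtb(\pi)|=\nest(\sigma)-\ene(\sigma)+\wene(\sigma)=\wne(\sigma).
\]
Thus everything after the combinatorial tally in stage two reduces to a direct translation between the two descriptions.
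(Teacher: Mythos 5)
Your proposal follows the same overall architecture as the paper's proof: establish \eqref{eq:ndes-wex} and \eqref{eq:last entry} first, then derive \eqref{eq:aba-ene} from them, and finally obtain \eqref{eq:213-wne} by chaining \eqref{eq:diff-213-231}, \eqref{eq:312-cros}, \eqref{eq:aba-ene}, and the decompositions \eqref{eq:nestdecomp}--\eqref{eq:wnedecomp}. The final computation in your fourth stage is exactly the paper's, and your derivation of \eqref{eq:aba-ene} from \eqref{eq:ndes-wex} and \eqref{eq:last entry} is essentially the same argument. Your self-contained proof of \eqref{eq:ndes-wex} from the $F$/$G$ dichotomy built into the biword construction is correct and is a reasonable substitute for the paper's citation of \cite[Eq.~(31)]{SZ10}.

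The weak point is your second stage, where you predict the main work will be. Your sketch there never actually closes: the crux is contained in the phrases ``this should follow by interpreting $\ldes(j,\pi)$ as counting descents of $\pi$ that straddle $j$, and matching that number to the depletion of $G'\cap(\pi_n,n]$'' and ``a symmetric counting argument then pinpoints $n$.'' No explicit invariant is stated and no induction is carried out, so as written this is a plan for a proof rather than a proof. The paper avoids the counting entirely: it invokes the known fact that $\Phi_{\CSZ}$ preserves the last entry of a permutation, and since $\Phi_{\CSZ}=\iota\circ\Phi_{\SZ}$, this says precisely that $(\Phi_{\SZ}(\pi))^{-1}_n=\pi_n$, i.e. $\sigma_{\pi_n}=n$, which is \eqref{eq:last entry}. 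If you want to keep your from-scratch route, you would need to make the depletion argument precise (for instance, by tracking, as $j$ runs down through $G\cap(\pi_n,n]$, an explicit bijection between the elements of $G'$ already consumed and a set of indices certifying that $n$ is still available and that its rank among the survivors of $G'\cap[\pi_n,n]$ equals $\ldes(\pi_n,\pi)+1$ at the moment $\pi_n$ is processed). Otherwise, the cleanest fix is to replace stage two with the $\Phi_{\CSZ}$ last-entry argument.
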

\begin{proof}
Note that \eqref{eq:ndes-wex} and \eqref{eq:312-cros} are contained in \cite[Eq.~(31)]{SZ10} and \cite[Eq.~(30)]{SZ10}, respectively. In fact, the proof given in \cite{SZ10} reveals the following stronger results that imply \eqref{eq:312-cros} upon summation:
$$\rdes(i,\pi)=\nest(i,\sigma),\quad \ldes(i,\pi)=\cros(i,\sigma),$$
for all $1\le i\le n$. Next, \eqref{eq:last entry} follows from the fact that $\Phi_{\CSZ}$ preserves the last entry of a permutation. Consequently, it suffices to prove \eqref{eq:aba-ene} and \eqref{eq:213-wne}.

Take any $\pi_i\in\Aba(\pi)$, then by \eqref{eq:ndes-wex} and \eqref{eq:last entry} we have $$\pi_n<\pi_i\le\sigma_{\pi_i}<\sigma_{\pi_n}=n,$$
thus $\pi_i\in\Ene(\sigma)$. The same argurment works for the opposite direction so we have $\Aba(\pi)=\Ene(\sigma)$. Similarly, for any $\pi_i\in\Dtb(\pi)$, applying \eqref{eq:ndes-wex} and \eqref{eq:last entry} we see that
$$n=\sigma_{\pi_n}\ge \pi_n>\pi_i>\sigma_{\pi_i},$$
i.e., $\pi_i\in\Wene(\sigma)$. Vice versa, $\pi_i\in\Wene(\sigma)$ implies that $\pi_i\in\Dtb(\pi)$. This completes the proof of \eqref{eq:aba-ene}.

Next, we compute the following difference using \eqref{eq:diff-213-231}, \eqref{eq:aba-ene}, \eqref{eq:nestdecomp}, and \eqref{eq:wnedecomp},
$$\rdes(\pi)-\rasc(\pi)=|\Aba(\pi)|-|\Dtb(\pi)|=\ene(\sigma)-\wene(\sigma)=\nest(\sigma)-\wne(\sigma).$$
This is equivalent to \eqref{eq:213-wne} in view of \eqref{eq:312-cros}.
\end{proof}

The following theorem contains the main
 properties of the mapping $\widehat{\theta}$.
\begin{theorem}\label{thm:sextuple}
The mapping $\hat{\theta}$ is a bijection on $\Sym_n$ such that for all permutation $\sigma\in\Sym_n$ we have
\begin{align}\label{eq:sextuple}
(\ecr,\ucr,\lcr,\ene,\une,\lne)\: \sigma &= (\ecr,\lcr,\ucr,\wene,\lne,\une)\: \widehat{\theta}(\sigma).
\end{align}
In view of the three decompositions \eqref{eq:crosdecomp}--\eqref{eq:wnedecomp} we have
\begin{align}\label{eq:crosequi}
(\cros, \nest)\sigma=(\cros, \wne)\hat{\theta}(\sigma).
%\cros(\sigma) &= \cros(\hat{\theta}(\sigma)),\\
%\nest(\sigma) &= \wne(\hat{\theta}(\sigma)).
\end{align}
\end{theorem}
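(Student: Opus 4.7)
The plan is to verify the six coordinate equalities implicit in \eqref{eq:sextuple} directly from the definition of $\widehat{\theta}$; the companion identity \eqref{eq:crosequi} then falls out by summing via the decompositions \eqref{eq:crosdecomp}--\eqref{eq:wnedecomp}. First I observe that $\widehat{\theta}$ is an involution on $\Sym_n$: the auxiliary map $a\mapsto\widehat{a}$ is itself an involution on $[n]$, since it fixes $n$ and swaps $k\leftrightarrow n-k$ for $1\le k<n$, and a direct substitution gives $\widehat{\theta}(\widehat{\theta}(\sigma))_i=\widehat{\widehat{\sigma_i}}=\sigma_i$ for all $i$.

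Write $\tau=\widehat{\theta}(\sigma)$ and set $m=\sigma^{-1}(n)$, so that $\tau_{n+1-m}=n$. The key observation is that for every index $i$ with $\sigma_i<n$ one has $\tau_{n+1-i}=n-\sigma_i$, which reverses both the position order and the value order among these ``non-$n$'' coordinates. Hence the pair substitution $(i,j)\mapsto(n+1-i,\,n+1-j)$ converts upper-arc data into lower-arc data as soon as neither arc ends at $n$. Unpacking the inequalities, the defining condition $i<j\le\sigma_i<\sigma_j<n$ of $\ucr(\sigma)$ becomes $n+1-i>n+1-j>\tau_{n+1-i}>\tau_{n+1-j}$, which is exactly the defining condition of $\lcr(\tau)$; an entirely parallel computation gives $\une(\sigma)=\lne(\tau)$. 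The two reverse identities $\lcr(\sigma)=\ucr(\tau)$ and $\lne(\sigma)=\une(\tau)$ follow at once by applying what has just been proved to $\tau$ in place of $\sigma$ and invoking the involutivity of $\widehat{\theta}$.

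The subtler cases are $\ecr(\sigma)=\ecr(\tau)$ and $\ene(\sigma)=\wene(\tau)$, where the hat-map's special treatment of the value $n$ is forced to play a role. In both, one coordinate of the defining pair is pinned at $m$ in $\sigma$ and at $n+1-m$ in $\tau$, and the bijection reduces to $k\mapsto n+1-k$ on the remaining coordinate. For $\ecr$, this reduces the claim to the elementary counting identity $|\{i<m:\sigma_i\ge m\}|=|\{k>m:\sigma_k<m\}|$, which is immediate since $\sigma$ restricts to a bijection $[n]\setminus\{m\}\to[n-1]$. For $\ene(\sigma)=\wene(\tau)$, the substitution $j\mapsto j'=n+1-j$ converts the defining conditions $m<j\le\sigma_j<n$ into $\tau_{j'}<j'<n+1-m$, once again via $\tau_{j'}=n-\sigma_j$, turning $\sigma_j\ge j$ into $\tau_{j'}\le n-j=j'-1$. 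I expect this final step to be the main obstacle: the statistics $\ene$ and $\wene$ have visibly different shapes (``upper arc at $j$'' versus ``lower arc at $j'$''), and one must see that this asymmetry is exactly what the hat-map's twist on the value $n$ produces, which requires a careful tracking of strict versus non-strict inequalities.
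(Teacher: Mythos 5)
Your proposal is correct and follows essentially the same route as the paper: verify each coordinate of \eqref{eq:sextuple} directly under the substitution $(i,j)\mapsto(n+1-i,n+1-j)$, then sum via \eqref{eq:crosdecomp}--\eqref{eq:wnedecomp}. The only cosmetic difference is in the $\ecr$ step: the paper introduces four auxiliary sets $E_1,\ldots,E_4$ and two disjoint decompositions of the interval $[\sigma^{-1}(n),n]$ to derive $|E_1|=|E_2|$, whereas you phrase the identical count $|\{i<m:\sigma_i\ge m\}|=|\{k>m:\sigma_k<m\}|$ more compactly by noting that $\sigma$ restricts to a bijection $[n]\setminus\{m\}\to[n-1]$; both are the same pigeonhole argument.
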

The proof of Theorem~\ref{thm:sextuple} is a bit involved, so we decide to first show the proof of our main theorem, utilizing all the results we have collected so far.

\begin{proof}[Proof of Theorem~\ref{thm:hatphi}]
Note that \eqref{eq:hatphi} is equivalent to $\Phi_{\SZ}\circ\widehat{\phi}=\hat{\theta}\circ\Phi_{\SZ}$.
Applying this to a given permutation $\pi$ and writing $\sigma:=\Phi_{\SZ}(\pi)$, we obtain
\begin{align}\label{eq:long}
\Phi_{\SZ}(\widehat{\phi}(\pi)) &= \widehat{\theta}(\sigma).
\end{align}
Furthermore, we deduce from \eqref{eq:312-cros}, \eqref{eq:213-wne}, and \eqref{eq:long} that
\begin{align*}
%\ldes(\pi) &= \cros(\sigma),\; \rdes(\pi)=\nest(\sigma),\\
\ldes(\widehat{\phi}(\pi)) &=\cros(\Phi_{\SZ}(\widehat{\phi}(\pi)))=\cros(\widehat{\theta}(\sigma)),\\
\rasc(\widehat{\phi}(\pi)) &=\wne(\Phi_{\SZ}(\widehat{\phi}(\pi)))=\wne(\widehat{\theta}(\sigma)).
\end{align*}
Combining the above two relations with
\eqref{eq:crosequi} and \eqref{eq:312-cros},  we have
\begin{align*}
(\ldes, \rasc)\widehat{\phi}(\pi)
=(\cros, \widetilde{\nest})\widehat{\theta}(\sigma)
=(\cros, \nest)\sigma
=(\ldes, \rdes)\pi.
\end{align*}
This is \eqref{eq:phi213} and it completes the proof of Theorem~\ref{thm:hatphi}.
\end{proof}

\begin{proof}[Proof of Theorem~\ref{thm:sextuple}]
We first show that $\hat{\theta}$ swaps the pair $(\ucr,\lcr)$. Suppose $(i,j)\in[n]\times[n]$ is an upper-crossing pair in $\sigma$, i.e., $i<j\le\sigma_i<\sigma_j<n$. Then
$$n+1-i>n+1-j>\hat{\sigma}_i>\hat{\sigma}_j.$$
Note that (see \eqref{def:hattheta}) $\hat{\sigma}_i$ (resp.~$\hat{\sigma}_j$) is the ($n+1-i$)-th (resp.~($n+1-j$)-th) letter in $\hat{\theta}(\sigma)$, which means that $(n+1-j,n+1-i)$ forms a lower-crossing pair in $\hat{\theta}(\sigma)$. Obviously this process is reversible, so the number of upper-crossings in $\sigma$ equals the number of lower-crossings in $\hat{\theta}(\sigma)$. In the same vein, we can show that the number of lower-crossings in $\sigma$ equals the number of upper-crossings in $\hat{\theta}(\sigma)$. It follows that
$$(\ucr,\lcr)\: \sigma = (\lcr,\ucr)\: \hat{\theta}(\sigma).$$

It remains to show $\ecr(\sigma)=\ecr(\hat{\theta}(\sigma))$. If $\sigma_n=n$, then $\hat{\theta}(\sigma)$ begins with $n$, so we have $\ecr(\sigma)=0=\ecr(\hat{\theta}(\sigma))$. Otherwise we can assume $\sigma_j=n$ with $j<n$, and we decompose the interval $[j,n]:=\{j,j+1,\ldots,n\}$ in two ways involving four subsets. Firstly, viewing elements in $[j,n]$ as images under $\sigma$, we get the split\footnote{We use $A\uplus B$ to denote the disjoint union of two sets $A$ and $B$.}:
\begin{align}\label{eq:split-1}
[j,n] &= E_1(\sigma)\uplus E_3(\sigma),
\end{align}
where
\begin{align*}
E_1(\sigma) &:= \{\sigma_i:i<j\le\sigma_i\},\\
E_3(\sigma) &:= \{\sigma_i:j\le\min(i,\sigma_i)\}.
\end{align*}
Secondly, viewing elements in $[j,n]$ as preimages under $\sigma$, we get another split:
\begin{align}\label{eq:split-2}
[j,n] &= E_2(\sigma)\uplus E_4(\sigma),
\end{align}
where
\begin{align*}
E_2(\sigma) &:= \{i:\sigma_i< j\le i\},\\
E_4(\sigma) &:= \{i:j\le\min(i,\sigma_i)\}.
\end{align*}
Now each pair $(i,\sigma_i)$ satisfying $j\le\min(i,\sigma_i)$ corresponds to a unique element in $E_3(\sigma)$ and $E_4(\sigma)$, respectively, hence we have $|E_3(\sigma)|=|E_4(\sigma)|$. Taking cardinalities in both \eqref{eq:split-1} and \eqref{eq:split-2}, we derive that
\begin{align}
|E_1(\sigma)| &=|E_2(\sigma)|.\label{eq:E1=E2}
\end{align}
We further claim that
\begin{align}
\ecr(\sigma) &= |E_1(\sigma)|,\label{eq:E1}\\
\ecr(\hat{\theta}(\sigma)) &= |E_2(\sigma)|.\label{eq:E2}
\end{align}

% It is evident from the definitions that $|E_3(\sigma)|=|E_4(\sigma)|$ and $$E_1(\sigma)\uplus E_3(\sigma)=[j,n]=E_2(\sigma)\uplus E_4(\sigma).$$
Combining \eqref{eq:E1=E2}, \eqref{eq:E1}, and \eqref{eq:E2}, we conclude that $\ecr(\sigma)=\ecr(\hat{\theta}(\sigma))$, as desired. Now \eqref{eq:E1} is clear from the definition of ending-crossing number (see \eqref{def:ecr}), since $\sigma_i\in E_1(\sigma)$ if and only if $(i,j)$ forms an ending-crossing pair of $\sigma$. For \eqref{eq:E2}, take any ending-crossing of $\hat{\theta}(\sigma)$, say $(n+1-k,n+1-j)$ satisfying
$$n+1-k<n+1-j\le \hat{\sigma}_k<\hat{\sigma}_{n+1-j}=n,$$
we have
$$k>j>n-\hat{\sigma}_k=\sigma_k,$$
which implies that $k\in E_2(\sigma)$. Conversely, any element $k\in E_2$ corresponds to $n+1-k$, which forms the ending-crossing pair $(n+1-k,n+1-j)$ in $\hat{\theta}(\sigma)$ using similar argument. This one-to-one correspondence proves \eqref{eq:E2}, and thus completes the proof of the first three identities contained in \eqref{eq:sextuple} that involve crossing numbers.

Now for the nesting numbers, a similar argument gives us that $(\une,\lne)\sigma=(\lne,\une)\hat{\theta}(\sigma)$. For the final missing piece in \eqref{eq:sextuple}, i.e., $\ene(\sigma)=\wene(\hat{\theta}(\sigma))$, we can prove the following stronger result:
$$j\in \Ene(\sigma)\Longleftrightarrow n+1-j\in \Wene(\hat{\theta}(\sigma)).$$
Indeed, if $j\in\Ene(\sigma)$, we have $i<j\le\sigma_j<\sigma_i=n$, which is equivalent to
$$n=\hat{\sigma}_i\ge n+1-i>n+1-j>\hat{\sigma}_j.$$
I.e., $n+1-j\in\Wene(\hat{\theta}(\sigma))$. Conversely, $n+1-i\in\Wene(\hat{\theta}(\sigma))$ implies $j\in\Ene(\sigma)$ as well.
\end{proof}

%%%%%%%%%%%%%%%%%%%%%%%%%%%%%%%%%%%%%%%%%%%%%%%
\section{Further application of \texorpdfstring{$\widehat\phi$}{widehat-phi}}\label{sec:baxter}

\subsection{Link to the past} 
A \emph{$2$-Motzkin path} of length $n$ is a lattice path in the first quadrant starting from $(0,0)$, ending at $(n,0)$, with four kinds of steps: up steps ($\rU$) going from $(a,b)$ to $(a+1,b+1)$; down steps ($\rD$) going from $(a,b)$ to $(a+1,b-1)$; and level stpes going from $(a,b)$ to $(a+1,b)$ with two possible colors red ($\rLr$) and blue ($\rLb$). We often express $2$-Motzkin paths as words consisted of letters from $\{\rU,\rD,\rLr,\rLb\}$. For convenience, we will also label a step as $\ULr$ (resp.~$\DLb$), if it is either an up (resp.~a down) step or a level step in red (resp.~blue). %The label $\LL$ will be used similarly if needed.

Next we recall the notion of \emph{shifted and restricted Laguerre history}, which is a variant of restricted Laguerre history used by the first two authors in \cite[Definition 2.1]{CF23}.

\begin{Def}\label{def:LH}
A (shifted and restricted) Laguerre history of length $n$ is a triple $W=(w,h,c)$, such that
\begin{enumerate}
  \item $w=w_1\cdots w_n$ is a $2$-Motzkin path of length $n$.
  \item $h=h_1\cdots h_n$ with $h_i:=\#\{j : j<i, w_j=\rU\}-\#\{j : j<i, w_j=\rD\}$.
  \item $c=c_1\cdots c_n$ is a sequence of integers with $h_i\ge c_i\ge \begin{cases}
  0, & \text{if } w_i=\ULr,\\
  1, & \text{if } w_i=\DLb.
  \end{cases}$
\end{enumerate}
For each $i=1,2,\ldots,n$, we say the $i$-th step of $W$ is of type $w_i$, height $h_i$, and weight $c_i$. We refer to $W$ as an sr-Laguerre history and denote by $\LH_n$ the set of sr-Laguerre histories of length $n$.
\end{Def}

\begin{Def}[{\cite[Definition 3.1]{CF23}}]
Given an sr-Laguerre history $W=(w,h,c)\in\LH_n$, we define its {\it critical step} to be the last step of $W$ that is weighted by $0$. We use
\begin{align}
\cs(W)& :=\max\{i\in[n] : c_i=0\}
\end{align}
to denote the index of the critical step of $W$.
\end{Def}

\begin{example}
Displayed below is an sr-Laguerre history of length $n=10$, where the weight sequence $c=c_1\cdots c_{10}$ is marked over each step and the critical step is marked by a red cross.

\begin{figure}[!ht]
  \begin{center}
    \begin{tikzpicture}[scale=0.8]
      \foreach\i\j in {0/0, 1/1, 2/2, 3/2, 4/2, 5/1, 6/1, 7/0, 8/0, 9/1, 10/0}
      {\filldraw[black] (\i,\j) circle[radius=2.5pt];}
      \draw[line width=1.7pt] (0,0) -- (1,1) -- (2,2);
      \draw[line width=1.7pt] (4,2) -- (5,1);
      \draw[line width=1.7pt] (6,1) -- (7,0);
      \draw[line width=1.7pt] (8,0) -- (9,1) -- (10,0);
      \draw[line width=1.7pt,draw=cyan] (2,2) -- (3,2);
      \draw[line width=1.7pt,draw=cyan] (5,1) -- (6,1);
      \draw[line width=1.7pt,draw=red] (3,2) -- (4,2);
      \draw[line width=1.7pt,draw=red] (7,0) -- (8,0);
        % weights
      \node[above left] at (.6,.4) {$0$};
      \node[above left] at (1.6,1.4) {$1$};
      \node[above] at (2.5,2) {$2$};
      \node[above] at (3.5,2) {$1$};
      \node[above] at (5.5,1) {$1$};
      \node[above] at (7.5,0) {$0$};
      \node[above left] at (8.6,.4) {$0$};
      \node[above right] at (4.4,1.4) {$2$};
      \node[above right] at (6.4,.4) {$1$};
      \node[above right] at (9.4,.4) {$1$};
      \draw[color=red] plot[only marks,mark=x,mark size=4pt] (8.7,.5);
    \end{tikzpicture}
    \caption{An sr-Laguerre history $W\in\LH_{10}$}
    \label{fig:history}
  \end{center}
\end{figure}
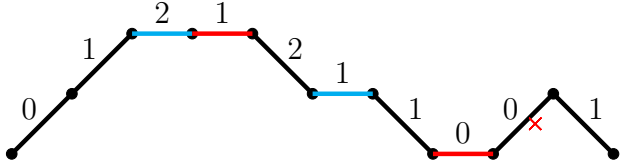

\end{example}

To make the paper self-contained, we also recall here the definition of the involution $\xi:\LH_n\to \LH_n$ given in \cite[Theorem 3.2]{CF23}, as well as a table that lists out all the cases in the construction of $\xi$. This Table~\ref{xsi-cases} will be quite useful when we show a key property of $\xi$ in Lemma~\ref{lem:xi closed}.

\begin{Def}\label{def:xsi}
Let $\xi$ be the unique involution $\xi:\LH_n\rightarrow \LH_n$ such that, if $W=(w,h,c)$ with $\cs(W)=m$ and $V:=\xi(W)=(v,g,b)$, then
\begin{enumerate}
  \item $\cs(V)=n+1-m$.
  \item For any $n+1-m\neq j\in [n]$, $v_j=\ULr$ if and only if $w_{n+1-j}=\DLb$.
  \item For any $j\in[n]$, $g_j=
  \begin{cases}
  h_{n+1-j}+1, & \text{if $j>n+1-m$ and $v_j=\DLb$,} \\
  h_{n+1-j}-1, & \text{if $j<n+1-m$ and $v_j=\ULr$,} \\
  h_{n+1-j}, & \text{otherwise}.
  \end{cases}$
  \item For any $j\in[n]$, $b_j=g_j-h_{n+1-j}+c_{n+1-j}$.
\end{enumerate}
\end{Def}

\renewcommand{\arraystretch}{1.4}
\begin{table}
{\small
\begin{tabular}{cccccccc}
\toprule
Case no. & $j$ & $v_j$ & $v_{j+1}$ & $w_{n+1-j}$ & $w_{n-j}$ & $g_j$ & $g_{j+1}$\\
\midrule
1 & $n+1-m$ & $\ULr$ & $\ULr$ & $\ULr$ & $\DLb$ & $h_{n+1-j}$ & $h_{n-j}$ \\

2 & $n+1-m$ & $\ULr$ & $\DLb$ & $\ULr$ & $\ULr$ & $h_{n+1-j}$ & $h_{n-j}+1$ \\

3 & $n-m$ & $\ULr$ & $\ULr$ & $\DLb$ & $\ULr$ & $h_{n+1-j}-1$ & $h_{n-j}$ \\

4 & $n-m$ & $\DLb$ & $\ULr$ & $\ULr$ & $\ULr$ & $h_{n+1-j}$ & $h_{n-j}$ \\

5 & $<n-m$ & $\ULr$ & $\ULr$ & $\DLb$ & $\DLb$ & $h_{n+1-j}-1$ & $h_{n-j}-1$ \\

6 & $<n-m$ & $\ULr$ & $\DLb$ & $\DLb$ & $\ULr$ & $h_{n+1-j}-1$ & $h_{n-j}$ \\

7 & $<n-m$ & $\DLb$ & $\ULr$ & $\ULr$ & $\DLb$ & $h_{n+1-j}$ & $h_{n-j}-1$ \\

8 & $<n-m$ & $\DLb$ & $\DLb$ & $\ULr$ & $\ULr$ & $h_{n+1-j}$ & $h_{n-j}$ \\

9 & $>n+1-m$ & $\ULr$ & $\ULr$ & $\DLb$ & $\DLb$ & $h_{n+1-j}$ & $h_{n-j}$ \\

10 & $>n+1-m$ & $\ULr$ & $\DLb$ & $\DLb$ & $\ULr$ & $h_{n+1-j}$ & $h_{n-j}+1$ \\

11 & $>n+1-m$ & $\DLb$ & $\ULr$ & $\ULr$ & $\DLb$ & $h_{n+1-j}+1$ & $h_{n-j}$ \\

12 & $>n+1-m$ & $\DLb$ & $\DLb$ & $\ULr$ & $\ULr$ & $h_{n+1-j}+1$ & $h_{n-j}+1$ \\

13 & $n \: (m=1)$ & $\rLr$ & n/a & $\ULr$ & n/a & 0 & 0 \\

14 & $n \: (m>1)$ & $\rD$ & n/a & $\ULr$ & n/a & 1 & 0  \\
\bottomrule\\
\end{tabular}
}
\caption{All cases in the construction of $V=(v,g,b)=\xi(W)$, where $\cs(W)=m$.}\label{xsi-cases}
\end{table}

To end this subsection, we show that actually $\widehat\phi$ is an alternative way of defining $\phi$. For clearer notation, we use $\iota:\pi\mapsto\pi^{-1}$ to denote the map of taking group-theoretical inverse, defined on $\Sym_n$. First off, it is known that (see for example \cite{CSZ97}, the paragraph above Corollary 4.13 in \cite{CF23}, and note that $\Phi_{\CSZ}=\iota\circ\Phi_{\SZ}$ \cite[p.~1696]{SZ10})
\begin{align}\label{eq:FVFZ}
\Phi_{\FV}=\Phi_{\FZ}\circ\iota\circ\Phi_{\SZ}.
\end{align}
% where $\Phi_{\SZ}:=\iota\circ\Phi_{\CSZ}$ is a variant of the mapping $\Phi_{\CSZ}$, first considered in \cite{SZ10}.

Moreover, it was shown in \cite[Eq.~(4.34)]{CF23} that
\begin{align}\label{eq:xitheta}
\Phi_{\FZ}^{-1}\circ\xi\circ\Phi_{\FZ} = \theta,
\end{align}
where $\theta:\Sym_n\to \Sym_n$ is an involution defined as follows: if $\pi=\pi_1\pi_2\ldots\pi_n\in\Sym_n$, then
$$\theta(\pi):=\pi^c_{n-1}\pi^c_{n-2}\ldots\pi^c_1\pi^c_n,$$
where $\pi^c_i=n+1-\pi_i$ for $i\in[n]$.

\begin{proposition} Let $\widehat{\theta}$ be defined by \eqref{def:hattheta}, 
then
\begin{align}\label{hatphi=phi}
\phi=\Phi_{\SZ}^{-1}\circ\widehat{\theta}\circ\Phi_{\SZ}=\widehat{\phi}.
\end{align}
\end{proposition}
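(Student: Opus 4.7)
The plan is to deduce the proposition from the two bridging identities \eqref{eq:FVFZ} and \eqref{eq:xitheta} already recorded above; together they reduce the equality $\phi=\widehat\phi$ to a purely elementary identity between two explicit involutions on $\Sym_n$. No further properties of the bijections $\Phi_{\FV}$, $\Phi_{\FZ}$, $\Phi_{\SZ}$ or of the involutions $\xi$ and $\theta$ will be needed.

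First I would substitute $\Phi_{\FV}=\Phi_{\FZ}\circ\iota\circ\Phi_{\SZ}$ from \eqref{eq:FVFZ} into the definition \eqref{def:phi} of $\phi$. Since $\iota^{-1}=\iota$, the composition $\Phi_{\FV}^{-1}\circ\xi\circ\Phi_{\FV}$ unfolds as
\begin{align*}
\phi=\Phi_{\SZ}^{-1}\circ\iota\circ\bigl(\Phi_{\FZ}^{-1}\circ\xi\circ\Phi_{\FZ}\bigr)\circ\iota\circ\Phi_{\SZ}.
\end{align*}
Replacing the parenthesized factor by $\theta$ using \eqref{eq:xitheta} yields $\phi=\Phi_{\SZ}^{-1}\circ(\iota\circ\theta\circ\iota)\circ\Phi_{\SZ}$. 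Comparing with \eqref{eq:hatphi}, the proposition becomes equivalent to the elementary claim
\begin{align*}
\iota\circ\theta\circ\iota=\widehat\theta\quad\text{on }\Sym_n,
\end{align*}
that is, $(\theta(\pi^{-1}))^{-1}=\widehat\theta(\pi)$ for every $\pi\in\Sym_n$.

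To verify this, I would argue position-by-position. Let $k$ be the unique index with $\pi_k=n$. Since $\widehat n=n$ and $\widehat a=n-a$ for $a<n$, the right-hand side $\widehat\theta(\pi)$ places the letter $n$ at position $n+1-k$ and places $n-\pi_{n+1-v}$ at each other position $v$. For the left-hand side, set $\tau=\pi^{-1}$ and $\sigma=\theta(\tau)$; by the definition of $\theta$ we have $\sigma_n=n+1-\tau_n=n+1-k$ and $\sigma_i=n+1-\tau_{n-i}$ for $i<n$. Inverting, $\sigma^{-1}=(\iota\circ\theta\circ\iota)(\pi)$ has value $n$ at position $n+1-k$ (the unique $i$ with $\sigma_i=n$ is $i=n$, because $\tau_n=k$), and at each position $v\neq n+1-k$ it has value $i=n-\pi_{n+1-v}$ (the unique $i<n$ satisfying $n+1-\tau_{n-i}=v$). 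Both permutations agree, as claimed.

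The one step that needs care is this final position-by-position check, but the care required is entirely notational: the index $n$ is singled out by the definition of $\theta$ (it is the only position left in place), whereas the letter $n$ is singled out by the definition of $\widehat{\cdot}$ (it is the only letter fixed), and the inversion $\iota$ is exactly the ingredient that translates between these two conventions. I therefore anticipate no serious obstacle — once \eqref{eq:FVFZ} and \eqref{eq:xitheta} are at hand, the argument is essentially a mechanical reduction.
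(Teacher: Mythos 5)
Your proof follows the same route as the paper: substitute \eqref{eq:FVFZ} and \eqref{eq:xitheta} into \eqref{def:phi} to reduce the claim to the elementary identity $\iota\circ\theta\circ\iota=\widehat\theta$, then verify that identity directly (the paper's verification tracks pairs $(a,b)_\pi$ rather than positions, but the computation is the same). One small slip worth flagging: in the parenthetical justifying that $\sigma^{-1}$ takes value $n$ at position $n+1-k$, you wrote ``the unique $i$ with $\sigma_i=n$ is $i=n$,'' which is false in general; what is actually needed and true is $\sigma_n=n+1-\tau_n=n+1-k$, which you had already established, so the argument stands.
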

\begin{proof}   We first verify  
\begin{align}\label{eq:hattheta}
\widehat{\theta}=\iota\circ\theta\circ\iota.
\end{align}
The proof is essentially an entry-by-entry verification. If a permutation $\pi$ sends $a$ to $b$, we record this as $(a,b)_{\pi}$. Now given any permutation $\sigma\in\Sym_n$, assume that $\sigma_j=n$. One sees that
$$(j,n)_{\sigma}\to (n,j)_{\iota(\sigma)}\to (n,n+1-j)_{\theta(\iota(\sigma))}\to (n+1-j,n)_{\iota\circ\theta\circ\iota(\sigma)},$$
which agrees with \eqref{def:hattheta}. While for any $i\neq j$, $1\le i\le n$, we have  $\sigma_i\neq n$ and
$$(i,\sigma_i)_{\sigma}\to (\sigma_i,i)_{\iota(\sigma)}\to (n-\sigma_i,n+1-i)_{\theta(\iota(\sigma))}\to (n+1-i,n-\sigma_i)_{\iota\circ\theta\circ\iota(\sigma)},$$
which agrees with \eqref{def:hattheta} as well. 
Plugging \eqref{eq:FVFZ}, \eqref{eq:xitheta}, and \eqref{eq:hattheta} into \eqref{def:phi}, we obtain \eqref{hatphi=phi}.
\end{proof}

\subsection{Proof of Theorem~\ref{thm:baxter}}\label{subsec:proof_of_theorem_ref_thm_baxter}
Thanks to \eqref{hatphi=phi}, proving Theorem~\ref{thm:baxter} is now equivalent to showing that the involution $\phi=\Phi_{\FV}^{-1}\circ\xi\circ\Phi_{\FV}$ is closed over the set of Baxter permutations. Here $\Phi_{\FV}$ refers to the version of Fran\c con-Viennot bijection used in \cite[p.~12--13]{CF23}. We recall here the definition of its inverse $\Phi_{\FV}^{-1}$ by running through an example. 

Given an sr-Laguerre history $W=(w,h,c)\in\LH_n$, we explain how to recover its preimage $\sigma\in\Sym_n$. For each $i=1,2,\ldots,n$, we let $\sigma^{(i)}$ be the subword obtained from $\sigma$ by deleting all letters in $\sigma$ that are greater than $i$, and we call it the $i$-th \emph{section} of $\sigma$. Let $\sigma^{(0)}$ be the empty permutation. We scan the $2$-Motzkin path $w$ from left to right. For $i=1,2,\ldots,n$, when the $i$-th step in $w$ is passed, we insert the letter $i$, possibly with open slots ``\varspace[.5em]'' attached to it, into the $c_i$-th open slot in $\sigma^{(i-1)}$ to get a new permutation $\sigma^{(i)}$. Here the slots are always labeled from right to left, starting with the label $0$. The four types of $w_i$ dictate the insertion types as follows.
\begin{align*}
  w_i=
    \begin{cases}
      \rU, & \text{ insert \varspace~$i$~\varspace, }\\
      \rLr, & \text{ insert $i$~\varspace, }\\
      \rLb, & \text{ insert \varspace~$i$, }\\
      \rD, & \text{ insert $i$. }\\
    \end{cases}
\end{align*}
Because of the shifting ($1\le c_i\le h_i$ rather than $0\le c_i\le h_i-1$ for each step $w_i=\DLb$), we see that the final permutation $\sigma^{(n)}$ must end with an open slot and we take it to be the preimage $\sigma:=\sigma^{(n)}$. 

For example, the preimage $\sigma$ of the history $W=(w,h,c)\in\LH_{10}$ shown in Figure~\ref{fig:history} is constructed step-by-step as follows. 
\begin{align*}
&w_1=\rU,~c_1=0, & \Rightarrow \qquad & \sigma^{(1)}= \text{\varspace~$1$~\varspace}; \\
&w_2=\rU,~c_2=1, & \Rightarrow \qquad & \sigma^{(2)}= \text{\varspace~$2$~\varspace~$1$~\varspace}; \\
&w_3=\rLb,~c_3=2, & \Rightarrow \qquad & \sigma^{(3)}= \text{\varspace~$3~2$~\varspace~$1$~\varspace}; \\
&w_4=\rLr,~c_4=1, & \Rightarrow \qquad & \sigma^{(4)}= \text{\varspace~$3~2~4$~\varspace~$1$~\varspace}; \\
&w_5=\rD,~c_5=2, & \Rightarrow \qquad & \sigma^{(5)}= \text{$5~3~2~4$~\varspace~$1$~\varspace}; \\
&w_6=\rLb,~c_6=1, & \Rightarrow \qquad & \sigma^{(6)}= \text{$5~3~2~4$~\varspace~$6~1$~\varspace}; \\
&w_7=\rD,~c_7=1, & \Rightarrow \qquad & \sigma^{(7)}= \text{$5~3~2~4~7~6~1$~\varspace}; \\
&w_8=\rLr,~c_8=0, & \Rightarrow \qquad & \sigma^{(8)}= \text{$5~3~2~4~7~6~1~8$~\varspace}; \\
&w_9=\rU,~c_9=0, & \Rightarrow \qquad & \sigma^{(9)}= \text{$5~3~2~4~7~6~1~8$~\varspace~$9$~\varspace}; \\
&w_{10}=\rD,~c_{10}=1, & \Rightarrow \qquad & \sigma:=\sigma^{(10)}= \text{$5~3~2~4~7~6~1~8~10~9$~\varspace}.
\end{align*}
We note in passing that this preimage $\sigma = 5~3~2~4~7~6~1~8~10~9$ avoids both patterns $3\underline{14}2$ and $2\underline{41}3$. Viennot \cite{Vi81} (see also \cite[Chapter 4b]{ABjC}) characterized the Baxter permutations by the weight sequences $c$ of their associated Laguerre histories. We adapt his characterization and recast it below in terms of sr-Laguerre histories rather than (unrestricted) Laguerre histories.

\begin{Def}\label{def:prudent}
An sr-Laguerre history $W=(w,h,c)\in\LH_n$ is called \emph{prudent}, if for each pair of consecutive steps $(w_i,w_{i+1})$, $1\le i<n$, the difference between their weights $c_{i+1}-c_{i}$ takes only two values according to the type of $w_i$. Namely, $W$ satisfies that:
\begin{enumerate}
  \item if $w_i=\ULr$, then $c_{i+1}-c_i$ equals $0$ or $1$;
  \item if $w_i=\DLb$, then $c_{i+1}-c_i$ equals $0$ or $-1$.
\end{enumerate}
The set of all prudent sr-Laguerre histories of length $n$ is denoted as $\LH^*_n$.
\end{Def}

\begin{lemma}[\cite{Vi81}, {\cite[Chapter 4b, p.~111]{ABjC}}]\label{lem:Viennot}
Given a permutation $\sigma\in\Sym_n$ and its image $W:=\Phi_{\FV}(\sigma)\in\LH_n$, we have that
$$\sigma\in\Sym_n(\baxter)\quad \text{ if and only if }\quad  W\in\LH_n^*.$$
\end{lemma}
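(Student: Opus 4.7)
The plan is to translate the prudent condition on weight sequences into a local condition on the permutation $\sigma=\Phi_{\FV}^{-1}(W)$, and then identify that condition with avoidance of the two vincular patterns $3\underline{14}2$ and $2\underline{41}3$. Since Viennot's original result is phrased for (unrestricted) Laguerre histories, the main bookkeeping task is to verify that the shifted-and-restricted convention of Definition~\ref{def:LH} only reindexes weights, without altering the substance of the argument.

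The first step is to establish a local dictionary for two consecutive insertions. Specifically, I would track how the position of letter $i+1$ in $\sigma^{(i+1)}$ relates to the position of letter $i$ in $\sigma^{(i)}$, as a function of the pair of types $(w_i,w_{i+1})$ and the difference $c_{i+1}-c_i$. The key observation is that the prudent condition asserts exactly that letter $i+1$ is inserted into one of the two slots adjacent to letter $i$: when $w_i=\ULr$, the condition $c_{i+1}-c_i\in\{0,1\}$ places $i+1$ in the slot immediately adjacent to $i$ (on the side determined by $w_{i+1}$); symmetrically, when $w_i=\DLb$, the condition $c_{i+1}-c_i\in\{-1,0\}$ is the dual. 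A violation of prudence at index $i$ therefore means that some later letter $j>i+1$ is inserted between $i$ and $i+1$ in the final permutation $\sigma$.

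For the forward direction (prudent $\Rightarrow$ Baxter), I would argue by contradiction. An occurrence of $3\underline{14}2$ at positions $a<b<b+1<c$ amounts to the adjacent ascent pair $(\sigma_b,\sigma_{b+1})$ admitting a value strictly between them both before ($\sigma_a$) and after ($\sigma_c$) along $\sigma$. Tracing the insertion of the integers $\sigma_b,\sigma_b+1,\ldots,\sigma_{b+1}$ in order, I would locate the first intermediate letter that is not placed in the slot adjacent to the chunk growing toward $\sigma_{b+1}$; the local dictionary from the previous step then forces a violation of the prudent bound precisely there. The treatment of $2\underline{41}3$ is symmetric, swapping ascents for descents and the two neighboring-slot roles.

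For the converse (Baxter $\Rightarrow$ prudent), I would take the smallest index $i$ at which the prudent condition fails and use the local dictionary to exhibit a concrete forbidden pattern in $\sigma$: one earlier insertion playing the role of the $3$ (or the $4$), the pair containing $i$ and $i+1$ playing the role of $\underline{14}$ (or $\underline{41}$), and one later insertion playing the role of the $2$ (or the $3$). The main obstacle I expect is the case analysis, since each of the four step-types for $w_i$ interacts differently with the convention $c_i\ge 1$ for $\DLb$-steps versus $c_i\ge 0$ for $\ULr$-steps, and the shifted slot-labelling requires careful off-by-one bookkeeping. Once these sub-cases are handled, both directions reduce to direct inspection of the insertion procedure described just before the statement of the lemma.
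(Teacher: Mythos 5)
Your high-level strategy---translate the prudence condition into a local pattern condition via the Fran\c con--Viennot insertion---is correct in spirit, but the specific route you outline has two concrete problems, and misses the simplification that makes the paper's argument go through cleanly.

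First, the ``local dictionary'' you describe is not accurate. You claim that when $w_i=\ULr$ the condition $c_{i+1}-c_i\in\{0,1\}$ places $i+1$ in a slot \emph{immediately adjacent} to $i$. This is only true when $w_i=\rU$. If $w_i=\rLr$, then after inserting $i$ at slot $c_i$ the configuration reads $\cdots\,\varspace_{c_i+1}\,B\,i\,\varspace_{c_i}\cdots$ for some (possibly nonempty) block $B$ of previously-inserted letters, so taking $c_{i+1}=c_i+1$ puts $i+1$ on the far side of $B$, not adjacent to $i$. The analogous thing happens for $w_i\in\{\rLb,\rD\}$. Consequently your inference ``a violation of prudence at index $i$ means a later letter is inserted between $i$ and $i+1$'' is also not correct as stated: prudence permits letters (of various sizes) to sit between $i$ and $i+1$, and the actual content of the local equivalence is about the \emph{relative order} of the small letters already present and the slots that will later receive large letters.

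Second, and more fundamentally, you are aiming to characterize the positional vincular patterns $3\underline{14}2$ and $2\underline{41}3$ directly, but the Fran\c con--Viennot construction inserts letters by \emph{value}, so the natural local statement concerns two consecutive \emph{values} $j,j+1$, not two consecutive positions $b,b+1$. Your plan for the forward direction---trace the whole range $\sigma_b,\sigma_b+1,\dots,\sigma_{b+1}$ and find the first mis-placed intermediate letter---would need to work, but it is a genuinely multi-step induction, and your rephrasing of the $3\underline{14}2$ pattern as ``a value strictly between $\sigma_b$ and $\sigma_{b+1}$ both before and after'' drops the required ordering $\sigma_c<\sigma_a$, which is not automatic (for example, $2143$ has such a configuration around the adjacent ascent $14$ but is Baxter). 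The paper sidesteps all of this by first passing to the \emph{dual} patterns $\overline{2}41\overline{3}$ and $\overline{3}14\overline{2}$, which are defined by value-adjacency rather than position-adjacency, via the identity $\Sym_n(\baxter)=\Sym_n(\dualbax)$ (a consequence of Baxter permutations being closed under inverse). For the dual patterns the local dictionary really is a single-step iff linking the sign and size of $c_{j+1}-c_j$ to the existence of a $\overline{2}41\overline{3}$ or $\overline{3}14\overline{2}$ occurrence with $j,j+1$ in the barred roles, and summing over $j$ gives the lemma. That reduction is the missing key idea in your proposal.
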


We do not have access to the content of \cite{Vi81} and our characterization (Definition~\ref{def:prudent}) of this restricted set of sr-Laguerre histories is different from, albeit related to, that of Viennot, so we have decided to include a proof of Lemma~\ref{lem:Viennot} here for the sake of completeness. To that end, we need a definition and an alternative way of describing Baxter permutations.
\begin{Def}
Given four indices $1\le i<j<k<\ell\le n$, we say the quadruple $(\sigma_i,\sigma_j,\sigma_k,\sigma_{\ell})$ forms a $\overline{2}41\overline{3}$ pattern in the permutation $\sigma\in\Sym_n$, if $\sigma_k<\sigma_i<\sigma_{\ell}=\sigma_i+1<\sigma_j$. In other words, the overlined letters in the pattern (in this case the $2$ and $3$) are required to be adjacent in their {\bf values} rather than positions when they are placed back to the permutation $\sigma$. Similarly, the quadruple $(\sigma_i,\sigma_j,\sigma_k,\sigma_{\ell})$ is said to form a $\overline{3}14\overline{2}$ pattern in $\sigma$, if $\sigma_j<\sigma_{\ell}<\sigma_i=\sigma_{\ell}+1<\sigma_k$. Let $\Sym_n(\dualbax)$ denote the set of permutations in $\Sym_n$ that avoid simultaneouly both patterns $\overline{2}41\overline{3}$ and $\overline{3}14\overline{2}$.
\end{Def}
\begin{proposition}
For every $n\ge 1$, we have
\begin{align}\label{dual baxter}
\Sym_{n}(\baxter) = \Sym_n(\dualbax).
\end{align}
\end{proposition}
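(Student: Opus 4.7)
The plan is to prove the two set inclusions $\Sym_n(\baxter)\subseteq\Sym_n(\dualbax)$ and $\Sym_n(\dualbax)\subseteq\Sym_n(\baxter)$ separately, each via the contrapositive. The two directions use different ideas: a local ``high--low transition'' for one direction, and a ``minimum-gap extraction'' for the other.

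For $\Sym_n(\baxter)\subseteq\Sym_n(\dualbax)$, I would start with $\sigma$ containing the barred pattern $\overline{2}41\overline{3}$, say at indices $i<j<k<\ell$ with $\sigma_k<\sigma_i<\sigma_\ell=\sigma_i+1<\sigma_j$. Because $\sigma_\ell-\sigma_i=1$, each intermediate position $m\in(i,\ell)$ is either \emph{high} ($\sigma_m>\sigma_\ell$) or \emph{low} ($\sigma_m<\sigma_i$); position $j$ is high and position $k>j$ is low. Scanning from $j$ to $k$, one must pass from high to low, so some index $m$ with $j\le m<k$ satisfies $\sigma_m$ high and $\sigma_{m+1}$ low. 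Then $(i,m,m+1,\ell)$ witnesses a vincular $2\underline{41}3$ occurrence, since $\sigma_{m+1}<\sigma_i<\sigma_\ell<\sigma_m$. A completely symmetric argument converts any $\overline{3}14\overline{2}$ occurrence into a $3\underline{14}2$ occurrence.

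For the reverse inclusion, suppose $\sigma$ contains a vincular $3\underline{14}2$ at positions $a<b<b+1<d$ with $\sigma_b<\sigma_d<\sigma_a<\sigma_{b+1}$, and among all such occurrences choose one that minimizes the value-gap $\sigma_a-\sigma_d$. I claim this minimum must equal $1$, which directly produces a $\overline{3}14\overline{2}$ at $(a,b,b+1,d)$. Indeed, if $\sigma_a-\sigma_d\ge 2$, then there is a value $v$ with $\sigma_d<v<\sigma_a$; let $p:=\sigma^{-1}(v)$. The chain $\sigma_b<\sigma_d<v<\sigma_a<\sigma_{b+1}$ forces $p\notin\{b,b+1\}$. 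If $p<b$, then $(p,b,b+1,d)$ is a $3\underline{14}2$ occurrence with strictly smaller gap $v-\sigma_d$; if $p>b+1$, then $(a,b,b+1,p)$ is a $3\underline{14}2$ occurrence with strictly smaller gap $\sigma_a-v$. Either case contradicts minimality. A symmetric extraction (minimizing $\sigma_d-\sigma_a$) turns any vincular $2\underline{41}3$ occurrence into a barred $\overline{2}41\overline{3}$ occurrence. I do not anticipate a serious obstacle: the decisive points are that consecutive values in a barred pattern force a binary dichotomy on intermediate positions, and that the ``middle pair'' $(b,b+1)$ of a vincular occurrence is never disturbed by the gap-shrinking move, which guarantees termination of the extraction at gap $1$.
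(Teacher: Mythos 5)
Your proof is correct, but it takes a genuinely different route from the paper. The paper's argument is indirect: it cites the known fact that the set of Baxter permutations is closed under the group-theoretical inverse $\iota\colon\pi\mapsto\pi^{-1}$, observes that $\pi$ contains $3\underline{14}2$ (resp.\ $2\underline{41}3$) if and only if $\pi^{-1}$ contains $\overline{2}41\overline{3}$ (resp.\ $\overline{3}14\overline{2}$), and combines the two. Your argument is self-contained and direct: for $\Sym_n(\baxter)\subseteq\Sym_n(\dualbax)$ you exploit that the ``$2$'' and ``$3$'' entries of a barred occurrence are consecutive in value, so every intervening position is either strictly high or strictly low, forcing an adjacent high--low transition between positions $j$ and $k$ which yields a vincular occurrence; for the reverse inclusion you run a value-gap minimization on vincular occurrences, showing that a gap $\ge 2$ can always be strictly decreased by swapping in the intermediate value at $\sigma^{-1}(v)$, so the minimum gap is $1$ and gives a barred occurrence. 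I checked all the inequality chains and index ranges in both directions and they hold. The trade-off is that the paper's proof is two lines modulo an imported lemma, whereas yours is longer but needs nothing from outside; your minimization step is essentially the standard ``complete Baxter'' argument and would be a clean replacement if one wished to avoid citing the closure of Baxter permutations under $\iota$.
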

\begin{proof}
Firstly, it is known that the set of Baxter permutations is closed under the inverse mapping $\iota:\pi\mapsto\pi^{-1}$; see for instance \cite[Lemma 2.1]{LL23}. In addition, we observe that a permutation $\pi$ contains the pattern $3\underline{14}2$ (resp.~$2\underline{41}3$) if and only if $\pi^{-1}$ contains the pattern $\overline{2}41\overline{3}$ (resp.~$\overline{3}14\overline{2}$). Combining these facts gives rise to \eqref{dual baxter}. We trust the reader to verify the details.
\end{proof}

\begin{proof}[Proof of Lemma~\ref{lem:Viennot}]
Let $\sigma\in\Sym_n$ be a permutation and $W=\Phi_{\FV}(\sigma)$ be its image in $\LH_n$. For each $1\le j<n$, we examine the way that the two consecutive letters $j$ and $j+1$ are inserted into $\sigma^{(j-1)}$ and $\sigma^{(j)}$. A careful analysis of the correspondence $\Phi_{\FV}$ leads to the following observations.
\begin{itemize}
  \item Suppose $j+1$ is inserted to the right of $j$. There exist letters $i<j$ and $k>j+1$ such that the quadruple $(j,k,i,j+1)$ forms a $\overline{2}41\overline{3}$ pattern in $\sigma$, if and only if
  \begin{align*}
    c_{j+1}-c_j\le
    \begin{cases}
    -1 & \text{ for $w_j=\ULr$},\\
    -2 & \text{ for $w_j=\DLb$}.
    \end{cases}
  \end{align*}
  See Figure~\ref{fig:insert} below for an illustration of the case $w_j=\rU$, where the label below an open slot \varspace~is exactly the value assigned to $c_j$ (resp.~$c_{j+1}$) if the letter $j$ (resp.~$j+1$) is inserted there. It is evident from the picture that $c_{j+1}-c_j\le -1$ in this case. Other cases could be verified in a similar fashion, the details are omitted.
  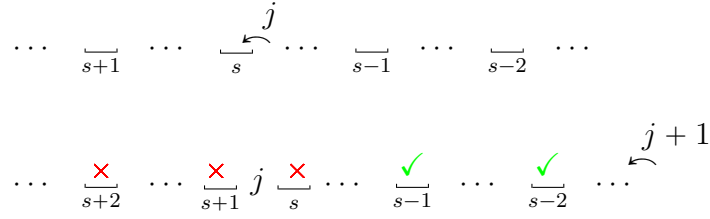
\begin{figure}[!ht]
  \begin{center}
    \begin{tikzpicture}[scale=0.9]
      % insert j
      \node at (0,2) {$\cdots$};
      \node at (1,1.8) {$\underset{s+1}{\mathtt{\varspace[1.2em]}}$};
      \node at (2,2) {$\cdots$};
      \node at (3,1.8) {$\underset{s}{\mathtt{\varspace[1.2em]}}$};
      \node at (4,2) {$\cdots$};
      \node at (5,1.8) {$\underset{s-1}{\mathtt{\varspace[1.2em]}}$};
      \node at (6,2) {$\cdots$};
      \node at (7,1.8) {$\underset{s-2}{\mathtt{\varspace[1.2em]}}$};
      \node at (8,2) {$\cdots$};
      \node at (3.3,2.1) {$\curvearrowleft$};
      \node at (3.5,2.5) {$j$};
      % insert j+1
      \node at (0,0) {$\cdots$};
      \node at (1,-.2) {$\underset{s+2}{\mathtt{\varspace[1.2em]}}$};
      \node at (2,0) {$\cdots$};
      \node at (3.3,-.1) {$\underset{s+1}{\mathtt{\varspace[1.2em]}}~j~\underset{s}{\mathtt{\varspace[1.2em]}}$};
      \node at (4.6,0) {$\cdots$};
      \node at (5.6,-.2) {$\underset{s-1}{\mathtt{\varspace[1.2em]}}$};
      \node at (6.6,0) {$\cdots$};
      \node at (7.6,-.2) {$\underset{s-2}{\mathtt{\varspace[1.2em]}}$};
      \node at (8.6,0) {$\cdots$};
      \node at (9,0.3) {$\curvearrowleft$};
      \node at (9.5,0.7) {$j+1$};
      \draw[color=red] plot[only marks,mark=x,mark size=4pt] (1,.2);
      \draw[color=red] plot[only marks,mark=x,mark size=4pt] (2.7,.2);
      \draw[color=red] plot[only marks,mark=x,mark size=4pt] (3.9,.2);
      \node[color=green] at (5.6,.3) {$\checkmark$};
      \node[color=green] at (7.6,.3) {$\checkmark$};
    \end{tikzpicture}
    \caption{A scenario that a $\overline{2}14\overline{3}$ pattern is formed when $j$ and $j+1$ are inserted}
    \label{fig:insert}
  \end{center}
\end{figure}

  \item Suppose $j+1$ is inserted to the left of $j$. There exist letters $i<j$ and $k>j+1$ such that the quadruple $(j+1,i,k,j)$ forms a $\overline{3}14\overline{2}$ pattern in $\sigma$, if and only if
  \begin{align*}
    c_{j+1}-c_j\ge
    \begin{cases}
    2 & \text{ for $w_j=\ULr$},\\
    1 & \text{ for $w_j=\DLb$}.
    \end{cases}
  \end{align*}
\end{itemize}
A direct consequence of the above observations is that $\sigma$ avoids both patterns $\overline{2}41\overline{3}$ and $\overline{3}14\overline{2}$ if and only if $W$ is prudent. This completes the proof in view of \eqref{dual baxter}.
\end{proof}

It is worth noting that the history $W$ in Figure \ref{fig:history} is prudent, this is in agreement with Lemma~\ref{lem:Viennot} above since its preimage $\Phi_{\FV}^{-1}(W)=5~3~2~4~7~6~1~8~10~9$ is a Baxter permutation. This characterization of Baxter permutations enabled Viennot to construct a bijection between Baxter permutations and triples of non-intersecting lattice paths. A fundamentally different bijection between these two objects was conjectured by Dilks~\cite[Conjecture 3.5]{Dil15} in his Ph.D. thesis and it was confirmed recently by Lin and Liu~\cite{LL23}.

\begin{lemma}\label{lem:xi closed}
For every $n\ge 1$, the involution $\xi: \LH_n\to \LH_n$ is closed over $\LH^*_n$.
\end{lemma}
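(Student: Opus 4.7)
The plan is to verify prudence of $V=\xi(W)=(v,g,b)$ by a direct case analysis against Table~\ref{xsi-cases}, which already enumerates all fourteen possibilities for a consecutive pair $(v_j,v_{j+1})$ based on the position $j$ relative to the critical index $n{+}1{-}m$. Since $\xi$ is known to be an involution on $\LH_n$, all that must be checked is the pairwise weight condition of Definition~\ref{def:prudent} on the sequence $b$. The key is to express $b_{j+1}-b_j$ in a form where prudence of $W$ can be directly plugged in.

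Starting from $b_j=g_j-h_{n+1-j}+c_{n+1-j}$ in Definition~\ref{def:xsi}(4), I would write
\[
b_{j+1}-b_j \;=\; (g_{j+1}-g_j)-(h_{n-j}-h_{n+1-j})+(c_{n-j}-c_{n+1-j}).
\]
The first two summands are read off the corresponding row of Table~\ref{xsi-cases} and simplify to a case-dependent constant in $\{-1,0,1\}$ once one notes that $h_{n-j}-h_{n+1-j}$ is determined by the type of $w_{n-j}$. Prudence of $W$, applied to the consecutive pair $(w_{n-j},w_{n+1-j})$, controls the last summand: $c_{n-j}-c_{n+1-j}\in\{0,-1\}$ if $w_{n-j}=\ULr$, and $c_{n-j}-c_{n+1-j}\in\{0,1\}$ if $w_{n-j}=\DLb$. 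For the eight ``generic'' cases 5--12, in which neither $w_{n-j}$ nor $w_{n+1-j}$ is the critical step, plugging in gives uniformly $b_{j+1}-b_j\in\{0,1\}$ when $v_j=\ULr$ and $b_{j+1}-b_j\in\{0,-1\}$ when $v_j=\DLb$, which is exactly the prudence condition for $V$.

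The four cases 1--4 that straddle the critical step need an extra ingredient, since prudence alone does not determine $c_{n-j}-c_{n+1-j}$. Here I would couple prudence of $W$ with the equality $c_m=0$ and the lower bounds $c_i\ge 0$ on $\ULr$-steps and $c_i\ge 1$ on $\DLb$-steps; case~4 additionally uses the maximality of $m$ (i.e.\ $c_{m+1}\neq 0$). These combined constraints pin the value of $c_{m\pm 1}$ to a single integer. For instance, in case~3 one has $w_m=\ULr$ and $w_{m+1}=\DLb$, so prudence forces $c_{m+1}\in\{0,1\}$ while the $\DLb$-bound forces $c_{m+1}\ge 1$; hence $c_{m+1}=1$ and $b_{j+1}-b_j=0$, safely inside $\{0,1\}$. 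A parallel argument handles cases 1, 2, 4. Cases 13 and 14 only describe the last step $v_n$ and impose no pairwise prudence constraint.

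I expect the main obstacle to be the sheer volume of case-by-case arithmetic through Table~\ref{xsi-cases}, together with the delicate critical-step cases, where one must juggle prudence of $W$, the minimality $c_m=0$, the maximality of $m$, and the height lower bounds simultaneously to extract a definite value for the neighbouring weight. Once all fourteen rows are checked, the implication $W\in\LH_n^*\Rightarrow V\in\LH_n^*$ is established, and Lemma~\ref{lem:xi closed} follows from the involutivity of $\xi$.
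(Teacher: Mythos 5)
Your proposal follows essentially the same route as the paper's proof: derive the identity $b_{j+1}-b_j=(g_{j+1}-g_j)+(h_{n+1-j}-h_{n-j})+(c_{n-j}-c_{n+1-j})$ from Definition~\ref{def:xsi}(4), read off $g_{j+1}-g_j$ and $w_{n-j}$ from each row of Table~\ref{xsi-cases}, and invoke prudence of $W$ on the pair $(w_{n-j},w_{n+1-j})$. One small remark: in cases 1--4 the extra ingredients you invoke ($c_m=0$, the weight lower bounds, maximality of $m$) are not actually needed — prudence alone already confines $c_{n-j}-c_{n+1-j}$ to the two admissible values, and after adding the table constants both resulting values of $b_{j+1}-b_j$ land inside the required range $\{0,1\}$ or $\{-1,0\}$, exactly as in the paper's treatment of case~2.
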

\begin{proof}
Since $\xi$ is an involution, it suffices to show that if a history $W=(w,h,c)\in\LH_n$ is prudent, then its image $V:=\xi(W)=(v,g,b)$ is prudent as well. To that end, we need to compute the weight difference $b_{j+1}-b_j$ for every $1\le j<n$, and verify that it meets the criteria given in Definition~\ref{def:prudent}. The whole process is a simple but tedious case-by-case verification that references Table~\ref{xsi-cases}. We elaborate on two such cases and leave the remaining cases to the reader.

First note that cases 13 and 14 are irrelevant since we require that $j<n$. Let us consider cases 2 and 12. Applying the condition (4) in Definition~\ref{def:xsi} for the indices $j$ and $j+1$, we deduce that
\begin{align}\label{eq:b-c}
  b_{j+1}-b_j=g_{j+1}-g_j+h_{n+1-j}-h_{n-j}+c_{n-j}-c_{n+1-j}.
\end{align}
Next for each case that we intend to verify, we look up in Table~\ref{xsi-cases} the values of $v_j,w_{n-j},g_j$, and $g_{j+1}$ and calculate $b_{j+1}-b_j$ according to \eqref{eq:b-c}. For case 2, we see that $$(v_j,w_{n-j},g_j,g_{j+1})=(\ULr,\ULr,h_{n+1-j},h_{n-j}+1),$$ 
plugging this into \eqref{eq:b-c} yields $b_{j+1}-b_j=c_{n-j}-c_{n+1-j}+1$. Now $W$ being prudent and $w_{n-j}=\ULr$ means that $c_{n+1-j}-c_{n-j}=0$ or $1$, hence $b_{j+1}-b_j=1$ or $0$. This means the prudence condition (1) is satisfied for the step $v_j=\ULr$. For case 12, a similar calculation shows that $b_{j+1}-b_j=0$ or $-1$, and $v_j=\DLb$, agreeing with the prudence condition (2).
\end{proof}

Now we are in a position to prove our second main result.
\begin{proof}[Proof of Theorem~\ref{thm:baxter}]
Since $\widehat\phi$ ($=\phi$) is an involution on $\Sym_n(\baxter)$, it suffices to show that it maps every Baxter permutation $\sigma$ to another Baxter permutation. Indeed, we see that
$$\widehat\phi(\sigma)=\phi(\sigma)=\Phi_{\FV}^{-1}(\xi(\Phi_{\FV}(\sigma))).$$
Now Lemma~\ref{lem:Viennot} tells us that $\Phi_{\FV}(\sigma)$ is a prudent history, then Lemma~\ref{lem:xi closed} implies that $\xi(\Phi_{\FV}(\sigma))$ remains being prudent. Applying Lemma~\ref{lem:Viennot} one more time we see that $\widehat\phi(\sigma)$ is a Baxter permutation, as desired.
\end{proof}

Define  the  polynomial 
\begin{align}
Q_n(p,q,r,s):=\sum_{\sigma\in\Sym_n(\baxter)}p^{\lasc(\sigma)}q^{\ldes(\sigma)}r^{\rasc(\sigma)}s^{\rdes(\sigma)}.
\end{align}
Noting that the set of Baxter permutations $\Sym_n(\baxter)$ is closed under the maps of complementation and reversal, we have the following symmetry which parallels \eqref{bds-symmetries},
\begin{align}
Q_n(p,q,r,s)=Q_n(q,p,s,r)=Q_n(s,r,q,p)=Q_n(r,s,p,q).
\end{align}
In this context, Theorem~\ref{thm:baxter} is equivalent to the following four relations.
\begin{corollary}
We have the relations
\begin{align}
Q_n(1,q,r,s) &= Q_n(1,q,s,r),\label{eq:baxter equidist"}\\
Q_n(p,1,r,s) &= Q_n(p,1,s,r),\nonumber\\
Q_n(p,q,1,s) &= Q_n(q,p,1,s),\nonumber\\
Q_n(p,q,r,1) &= Q_n(q,p,r,1).\nonumber
\end{align}
\end{corollary}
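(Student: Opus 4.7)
The plan is to obtain the four relations purely formally from Theorem~\ref{thm:baxter} combined with the three symmetries of $Q_n$ already recorded just above the corollary, namely
\begin{equation*}
Q_n(p,q,r,s)=Q_n(q,p,s,r)=Q_n(s,r,q,p)=Q_n(r,s,p,q),
\end{equation*}
which are transported verbatim from \eqref{bds-symmetries} because $\Sym_n(\baxter)$ is closed under both $\sigma\mapsto\sigma^c$ and $\sigma\mapsto\sigma^r$.

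First I would verify that relation \eqref{eq:baxter equidist"} is nothing but Theorem~\ref{thm:baxter} rewritten in $Q_n$-notation: specializing $p=1$ in the definition of $Q_n(p,q,r,s)$ kills the $\lasc$ factor, so $Q_n(1,q,r,s)=\sum_{\pi\in\Sym_n(\baxter)} q^{\ldes(\pi)}r^{\rasc(\pi)}s^{\rdes(\pi)}$, and Theorem~\ref{thm:baxter} asserts exactly that this polynomial equals $Q_n(1,q,s,r)$. This disposes of the first identity.

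Next, I would generate each of the remaining three identities by applying precisely one of the three nontrivial symmetries to both sides of \eqref{eq:baxter equidist"} and then relabeling variables. Concretely, applying $Q_n(p,q,r,s)=Q_n(q,p,s,r)$ converts $Q_n(1,q,r,s)=Q_n(1,q,s,r)$ into $Q_n(q,1,s,r)=Q_n(q,1,r,s)$, which after renaming $q\mapsto p$ is the second relation. Applying $Q_n(p,q,r,s)=Q_n(r,s,p,q)$ converts \eqref{eq:baxter equidist"} into $Q_n(r,s,1,q)=Q_n(s,r,1,q)$, which under the relabeling $(r,s,q)\mapsto(p,q,s)$ is the third relation. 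Finally, applying $Q_n(p,q,r,s)=Q_n(s,r,q,p)$ converts \eqref{eq:baxter equidist"} into $Q_n(s,r,q,1)=Q_n(r,s,q,1)$, which under $(s,r,q)\mapsto(p,q,r)$ is the fourth relation.

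There is no real obstacle here; the entire content of the corollary is the symmetry-closure argument above, and the only thing to track carefully is which slot each variable lands in after each of the three symmetries is invoked. The corollary therefore follows at once, and in fact the logical equivalence goes both ways, so any one of the four relations (combined with the three $Q_n$-symmetries) would recover Theorem~\ref{thm:baxter}.
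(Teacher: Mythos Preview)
Your proposal is correct and matches the paper's approach exactly: the paper does not write out a formal proof but simply asserts that ``Theorem~\ref{thm:baxter} is equivalent to the following four relations,'' relying on the $\mathbb{Z}_2\times\mathbb{Z}_2$ symmetries of $Q_n$ established just before the corollary. Your argument spells out explicitly (and accurately) the variable-tracking that makes this equivalence go through.
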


%%%%%%%%%%%%%%%%%%%%%%%%%%%%%%%%%%%%%%%%%%%%%%%%%%%%%%%
\section{Closing remarks}
A natural question that often arises when an involutive map is introduced, is to enumerate or even characterize its set of fixed points. This is the main theme in Dilks' Ph.D. thesis~\cite{Dil15}, which investigates involutions defined on various combinatorial objects that are enumerated by the Baxter numbers. For our involution $\widehat\phi$, this question can be addressed in the following way. First we note a simple fact.
\begin{proposition}
If $f:A\to B$ is a bijection between two finite sets $A$ and $B$, while $\zeta$ and $\eta$ are involutions on $A$ and $B$, respectively, and they are linked as 
$$\zeta = f^{-1}\circ\eta\circ f,$$
then an element $a\in A$ is fixed by $\zeta$ if and only if its image $f(a)\in B$ is fixed by $\eta$. In particular, the set of fixed points of $\zeta$ is equinumerous with the set of fixed points of $\eta$.
\end{proposition}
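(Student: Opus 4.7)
The plan is to argue that fixed points are preserved under conjugation in a completely elementary way, and then invoke the fact that $f$ restricts to a bijection on the fixed-point sets. The only real ingredient is the conjugation relation $\zeta = f^{-1} \circ \eta \circ f$; no properties of the specific combinatorial structures are needed.

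First, I would suppose $a \in A$ is fixed by $\zeta$, i.e.\ $\zeta(a) = a$. Applying $f$ to both sides and rewriting the left-hand side via the hypothesis $f \circ \zeta = \eta \circ f$ (which is just the conjugation relation with $f$ applied on the left) gives
\[
\eta(f(a)) = f(\zeta(a)) = f(a),
\]
so $f(a)$ is fixed by $\eta$. For the converse direction, suppose $b \in B$ satisfies $\eta(b) = b$, and let $a = f^{-1}(b)$. From $\zeta = f^{-1} \circ \eta \circ f$ we get
\[
\zeta(a) = f^{-1}(\eta(f(a))) = f^{-1}(\eta(b)) = f^{-1}(b) = a,
\]
so $a$ is fixed by $\zeta$. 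This shows the biconditional.

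Finally, since $f$ is a bijection between $A$ and $B$, and the above argument shows that $f$ sends $\mathrm{Fix}(\zeta) := \{a \in A : \zeta(a) = a\}$ into $\mathrm{Fix}(\eta) := \{b \in B : \eta(b) = b\}$, while $f^{-1}$ sends $\mathrm{Fix}(\eta)$ into $\mathrm{Fix}(\zeta)$, the restriction of $f$ to $\mathrm{Fix}(\zeta)$ is a bijection onto $\mathrm{Fix}(\eta)$. Equinumerosity of the two fixed-point sets follows immediately. There is no real obstacle here; the statement is purely formal, and the proof is a two-line verification plus the observation that restriction of a bijection to corresponding subsets remains a bijection.
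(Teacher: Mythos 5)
Your proof is correct and is precisely the elementary conjugation argument that the paper has in mind; indeed the paper states this proposition without proof, calling it a ``simple fact,'' so there is nothing to compare against beyond noting that your two-line verification plus the restriction observation is exactly the intended (and only reasonable) argument.
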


Recall that $\widehat\phi=\Phi_{\SZ}^{-1}\circ\widehat\theta\circ\Phi_{\SZ}$, and notice that the permutations fixed by $\widehat\theta$ exist only when $n=2m+1$ is odd. In this case $\sigma\in\Sym_{2m+1}$ satisfies $\widehat\theta(\sigma)=\sigma$ if and only if it has the form
$$\sigma= \sigma_1\cdots \sigma_m (2m+1) (2m+1-\sigma_m)\cdots(2m+1-\sigma_1).$$
This observation immediately gives us the following enumerative result.

\begin{theorem}
The number of fixed points of $\widehat\phi$ is $0$ when $n$ is even, and it is $2^m m!=(2m)!!$ when $n=2m+1$ is odd.
\end{theorem}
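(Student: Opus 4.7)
The proof is a straightforward application of the Proposition immediately preceding the theorem, combined with an elementary count, so I would organize the argument in three steps.

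First, I would invoke the Proposition with $f = \Phi_{\SZ}$, $A = B = \Sym_n$, $\zeta = \widehat{\phi}$, and $\eta = \widehat{\theta}$: since $\widehat{\phi} = \Phi_{\SZ}^{-1} \circ \widehat{\theta} \circ \Phi_{\SZ}$, the Proposition supplies a bijection (namely $\Phi_{\SZ}$ itself) between the $\widehat{\phi}$-fixed points and the $\widehat{\theta}$-fixed points in $\Sym_n$. So the problem reduces to counting permutations with $\widehat{\theta}(\sigma)=\sigma$.

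Second, I would verify the characterization of $\widehat{\theta}$-fixed permutations asserted in the paragraph before the theorem. The equation $\widehat{\theta}(\sigma)=\sigma$ is componentwise $\sigma_j = \widehat{\sigma}_{n+1-j}$ for every $j\in[n]$. Let $j^*$ be the unique index with $\sigma_{j^*}=n$; since $\widehat{a}=n$ forces $a=n$, the relation at $j=j^*$ forces $\sigma_{n+1-j^*}=n$, hence $j^*=n+1-j^*$. This requires $n$ odd, say $n=2m+1$, with $\sigma_{m+1}=n$; in particular, no fixed points exist when $n$ is even. For the remaining indices $j\ne m+1$, both $\sigma_j$ and $\sigma_{n+1-j}$ lie in $\{1,\dots,n-1\}$ and the relation becomes $\sigma_j + \sigma_{n+1-j} = n = 2m+1$, which is exactly the form $\sigma = \sigma_1\cdots\sigma_m(2m+1)(2m+1-\sigma_m)\cdots(2m+1-\sigma_1)$ stated in the paper. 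Conversely, any $\sigma$ of this form is easily seen to be fixed.

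Third, I would enumerate these permutations. Choosing $(\sigma_1,\dots,\sigma_m)$ amounts to choosing a subset $S\subseteq\{1,\dots,2m\}$ of size $m$ satisfying $S\cap(2m+1-S)=\emptyset$ (so the complementary values fill positions $m+2,\dots,2m+1$ automatically), together with an ordering of $S$. Such an $S$ contains exactly one element from each of the $m$ complementary pairs $\{k,2m+1-k\}$, $k=1,\dots,m$, giving $2^m$ subsets; each admits $m!$ orderings. The total count is therefore $2^m\,m! = (2m)!!$, matching the claim.

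There is essentially no obstacle here: the heavy lifting has already been done in establishing $\widehat{\phi} = \Phi_{\SZ}^{-1}\circ\widehat{\theta}\circ\Phi_{\SZ}$ and in setting up the Proposition, and the only small subtlety is the observation that $n$ is the unique fixed point of $a\mapsto\widehat{a}$, which is what rules out $n$ even and pins down $\sigma_{m+1}=n$ in the odd case.
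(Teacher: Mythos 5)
Your proof is correct and follows exactly the route the paper intends: reduce to counting $\widehat{\theta}$-fixed permutations via the preceding Proposition, derive the palindromic form $\sigma_1\cdots\sigma_m(2m+1)(2m+1-\sigma_m)\cdots(2m+1-\sigma_1)$ (forcing $n$ odd), and then count $2^m m!$ choices. You have merely spelled out the parity argument and the enumeration that the paper leaves implicit with the phrase ``This observation immediately gives us the following enumerative result,'' so there is nothing to flag.
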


Setting $s=1$ in \eqref{eq:baxter equidist"} produces the following identity:
\begin{align}\label{baxter analog}
Q_n(q,r):=Q_n(1,q,r,1)=\sum_{\sigma\in\Sym_n(\baxter)}q^{\ldes(\sigma)}r^{\rasc(\sigma)} = \sum_{\sigma\in\Sym_n(\baxter)}q^{\ldes(\sigma)}r^{\rdes(\sigma)},
\end{align}
which can be viewed as a Baxter permutation analogue of the two ends of \eqref{continued fraction}. Although the generating function of the Baxter numbers $\Bax_n$ does not seem to have a meaningful Jacobi type continued fraction expansion, it might still be worthwhile to find an algebraic proof of \eqref{baxter analog}. For the reader's interest, we include below the bivariate polynomials $Q_n(q,r)$ for $n=3,4,5,6,7$.
\begin{align*}
    Q_3(q,r) &= (4+q) + r,\\
    Q_4(q,r) &= (8+4q+2q^2) + (4+2q)r + 2r^2,\\
    Q_5(q,r) &= (16+12q+9q^2+4q^3+q^4) + (12+10q+5q^2+q^3)r\\
             &\quad + (9+5q+2q^2)r^2 + (4+q)r^3 + r^4,\\
    Q_6(q,r) &= (32+32q+30q^2+20q^3+12q^4+4q^5+2q^6)\\
             &\quad + (32+36q+28q^2+16q^3+6q^4+2q^5)r + (30+28q+22q^2+8q^3+4q^4)r^2 \\
             &\quad + (20+16q+8q^2+4q^3)r^3 + (12+6q+4q^2)r^4+(4+2q)r^5+2r^6,\\
    Q_7(q,r) &= (64+80q+88q^2+73q^3+56q^4+34q^5+20q^6+9q^7+4q^8+q^9)\\ 
             &\quad + (80+112q+111q^2+86q^3+56q^4+30q^5+14q^6+5q^7+q^8)r\\
             &\quad + (88+111q+112q^2+75q^3+47q^4+21q^5+9q^6+2q^7)r^2\\
             &\quad + (73+86q+75q^2+48q^3+25q^4+10q^5+3q^6)r^3\\
             &\quad + (56+56q+47q^2+25q^3+12q^4+3q^5)r^4 + (34+30q+21q^2+10q^3+3q^4)r^5\\
             &\quad + (20+14q+9q^2+3q^3)r^6 + (9+5q+2q^2)r^7+(4+q)r^8+r^9.
\end{align*}

With $Q_n(q,r)$ expressed in this way as a polynomial in $(\Z[q])[r]$, we make some observations on its specializations. Firstly, we note that $Q_n(q,0)$ is a polynomial refining the Catalan numbers $C_n=\frac{1}{n+1}\binom{2n}{n}$, and it agrees with the $t=1$ evaluation of the $(q,t)$-Catalan numbers previously studied in \cite[Theorem 1.1 choice \# 7]{FTHZ19}. On the other hand, the coefficient of $r$ in $Q_n(q,r)$ appears to be related to the so-called ``Touchard distribution'' \cite[A091894]{OEIS}. We shall explore this connection elsewhere.

Finally, we remark that it is somewhat unexpected that a single involution $\widehat\phi$ could prove two trivariate symmetries given in Theorems~\ref{thm:hatphi} and \ref{thm:baxter}. It might be worthwhile to look for other subclasses of permutations that are closed under $\widehat\phi$.

\section*{Acknowledgement}
This work was partially supported by the National Natural Science Foundation of China grants (No.~12171059 and No.~12271301).

\end{document}